\newtheorem{Theorem}{Theorem}
\newtheorem{Lemma}{Lemma}
\newtheorem{Proposition}[Theorem]{Proposition}
\newtheorem{Corollary}[Theorem]{Corollary}
\newtheorem{Conjecture}{Conjecture}
\newtheorem{Remark}{Remark}
\theoremstyle{definition}
\newtheorem{Definition}{Definition}
\newcommand{\cF}{\ensuremath{\mathcal F}}
\newcommand{\cS}{\ensuremath{\mathcal S}}
\newcommand{\bbE}{{\ensuremath{\mathbb E}} }
\newcommand{\bbN}{{\ensuremath{\mathbb N}} }
\newcommand{\bbP}{{\ensuremath{\mathbb P}} }
\newcommand{\bbQ}{{\ensuremath{\mathbb Q}} }
\newcommand{\bbR}{{\ensuremath{\mathbb R}} }
\newcommand{\bbZ}{{\ensuremath{\mathbb Z}} }
\newcommand{\bfP}{{\ensuremath{\mathbf P}} }
\newcommand{\bfE}{{\ensuremath{\mathbf E}} }
    \let\d=\delta  \let\e=\varepsilon
 \let\g=\gamma       \let\l=\lambda
      \let\o=\omega      
  \let\z=\zeta
\let\D=\Delta
\newcommand{\one}{\mathds{1}}
\newcommand{\ARW}{\mathrm{ARW}}
\newcommand{\zagg}{\zeta_a} % aggregate density
\newcommand{\zsta}{\zeta_s} % stationary density
\author[L.\ Levine]{Lionel Levine}
\address{Lionel Levine.
% \hfill\break \indent
Department of Mathematics, Cornell University, Ithaca, NY 14853.
%  \hfill\break \indent
}
\email{levine@math.cornell.edu}
\author[V. Silvestri]{Vittoria Silvestri}
\address{Vittoria Silvestri. 
% \hfill\break \indent
University of Rome La Sapienza, P.le Aldo Moro 5, 00185, Rome, Italy.}
\email{silvestri@mat.uniroma1.it}
\thanks{LL was partially supported by a Simons Fellowship, IAS Von Neumann Felowship, and NSF grant \href{https://www.nsf.gov/awardsearch/showAward?AWD_ID=1455272}{DMS-1455272}.}
\title{How far do activated random walkers spread from a single source?}
\begin{document}
\maketitle

\begin{abstract}
Unlike many particle systems, Activated Random Walk has nontrivial behavior even in one spatial dimension. We prove inner and outer bounds on the spread of $n$ activated random walkers from a single source in $\mathbb{Z}$.
The inner bound involves a comparison with the stationary distribution of activated random walkers on a finite interval, while the outer bound involves a comparison with the stabilization of an infinite Bernoulli configuration of activated random walkers on $\mathbb{Z}$.
\end{abstract}

%\tableofcontents

\section{Three experiments, one theorem, and two conjectures}

Activated Random Walk is the name of a particle system with two species, 
 active particles ``A'' and sleeping particles ``S'' that become active when an active particle encounters them (``$A+S \rightarrow 2A$'').  Each active particle performs a continuous time random walk on the infinite path $\bbZ$, stepping to a random neighbour at rate 1. When an active particle is alone, it falls asleep (``A $\rightarrow$ S'') at rate $\lambda$. A sleeping particle stays asleep forever, until and unless an active particle steps to its location.  The parameter $\lambda>0$ is called the \emph{sleep rate}. Skip ahead to Section \ref{s.defs} for a precise definition of the model. 

The first rigorous results about Activated Random Walk were proved by Hoffman and Sidoravicius (unpublished) in the case of totally asymmetric walks, and by Rolla and Sidoravicius \cite{rolla2012absorbing} in the case of symmetric walks; we refer to \cite{rolla2020} for a complete history. Surprisingly, many basic questions about this simple one-dimensional particle system remain open!  In this paper we relate the outcomes of three different experiments.
\medskip

\textbf{Experiment 1.} For $\ARW(\bbZ, \lambda)$ (Activated Random Walk on $\bbZ$ with sleep rate $\lambda$), start with \emph{any} stationary ergodic configuration $\eta : \bbZ \to \bbN$, all initially active.  If each site of $\bbZ$ is visited only finitely often, then we say that $\eta$ \emph{stabilizes}; otherwise, we say that $\eta$ \emph{explodes}.
Rolla, Sidoravicius and Zindy \cite{rolla2019universality} proved the remarkable fact that stabilization depends only on the mean number of particles per site, $\zeta := \bbE ( \eta(0) )$. Namely, 
%\begin{Theorem}[Critical density $\z_{c}$] 
%\cite{rolla2019universality}
	there is a constant $\z_{c}(\bbZ, \lambda)$ such that if $\zeta < \z_{c}$ then
	$\eta$ stabilizes with probability $1$, and if $\zeta > \z_{c}$ then $\eta$ explodes with probability $1$.
%\end{Theorem}
This theorem applies to infinite configurations $\eta$ on the infinite path $\bbZ$. The next experiment concerns a finite configuration.
\medskip

\textbf{Experiment 2.} For $\ARW(\bbZ, \lambda)$, start $n$ walkers at $0$. How far do they spread? 

\begin{Conjecture}[Aggregate density $\zagg$] \label{c.zagg}
	There is a constant $\zagg(\bbZ,\lambda)$ such that for any $\e>0$, with probability tending to 1 as $n \to \infty$, the set of visited sites contains a centered interval of length $(1-\e)n/\zagg$, and is contained in an interval of length $(1+\e)n/\zagg$.
	\end{Conjecture}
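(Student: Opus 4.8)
\bigskip
\noindent\textbf{A proof strategy for Conjecture~\ref{c.zagg}.}
I would first reduce the two-sided statement to a one-sided law of large numbers for the frontier. Put $n$ active walkers at $0$ and work in the stack/least-action representation, where ARW is abelian, so the odometer $u_n\colon\bbZ\to\bbN$, the final sleeping configuration, and the visited set $V_n$ are well defined and $u_n$, hence $V_n$, is nondecreasing in $n$ (adding mass only increases the odometer). Because particles only move between nearest neighbours and all walkers start at $0$, the set of nonempty sites is at every moment a connected interval containing $0$; hence $V_n=[-a_n,b_n]$ is automatically an interval with no holes, and the $x\mapsto -x$ symmetry of the model gives $a_n\overset{d}{=}b_n$. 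Therefore \emph{Conjecture~\ref{c.zagg} is equivalent to: the rightmost visited site satisfies $b_n/n\to 1/(2\zagg)$ in probability for a deterministic constant $\zagg$}. Indeed, then $a_n/n\to 1/(2\zagg)$ too, and identifying $\zagg$ with the limiting density $n/|V_n|$, the interval $[-a_n,b_n]$ contains whp the centred interval of length $2\min(a_n,b_n)\ge(1-\e)n/\zagg$ and is contained in the interval of length $a_n+b_n+1\le(1+\e)n/\zagg$; moreover $\zagg\le 1$ is automatic, since the point-source process a.s.\ stabilizes and leaves exactly one sleeping particle at each of $n$ sites, so $|V_n|\ge n$.

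So the task is a law of large numbers for $b_n$, with $\zagg$ being the asymptotic bulk settled density. The clean plan would be to get $\lim_n\bbE[b_n]/n$ from (approximate) sub- or super-additivity — most naturally through the inverse quantity $T_k:=\min\{n:b_n\ge k\}$, where one expects $T_{j+k}$ to stochastically dominate $T_j$ together with an independent copy of $T_k$ (advancing the frontier from $j$ to $j+k$ is harder than from $0$ to $k$, owing to attrition of the outward flux in transit through $[0,j]$), so that $\bbE[T_k]/k$ converges; and to complement this with a concentration bound for $b_n$ (equivalently $T_k$) about its mean, via a bounded-differences/martingale argument in the i.i.d.\ walk increments and sleep clocks, which needs an abelian stability estimate controlling the effect of resampling one clock. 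Transferring the LLN from $T_k$ back to $b_n$, hence to $|V_n|$, would then finish.

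The main obstacle is that this sub/super-additivity is far from clean: adding mass — whether further walkers at $0$, or old sleeping particles reactivated by a later wave — only \emph{increases} the odometer, so the obvious monotonicity points the unhelpful way, and the advance of the frontier past a level $j$ cannot be decoupled from the toppling history on $[0,j]$. This is why, with current tools, one expects only a pair of one-sided bounds: an \emph{inner} bound (the walkers reach distance at least a constant times $n$), obtained by forbidding topplings outside a window and comparing the result with the stationary distribution of driven ARW on a finite interval, whose density — a finite-volume surrogate for $\zagg$, like $\zsta$ or $\zth$ — lower-bounds the reach; and an \emph{outer} bound (they reach at most another constant times $n$), obtained by comparing a long toppled stretch left behind the point source with the stabilization of an infinite Bernoulli configuration of density just below $\z_c$, which forces the settled density along that stretch from below. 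These two constants need not a priori agree, and closing the gap — showing that the point-source dynamics self-averages, so the bulk settled density has a single limit $\zagg$ — appears to require a strong form of the density conjecture (that the finite-volume, stationary, and critical densities coincide) together with an ergodic coupling of the bulk of the point-source configuration to the infinite-volume stationary measure. This self-averaging is the crux, and is why the statement remains a conjecture.
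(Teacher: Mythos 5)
You are attempting to prove Conjecture~\ref{c.zagg}, which the paper does not prove: it is left open, and the paper's actual result, Theorem~\ref{th:intro}, gives exactly the two one-sided bounds you fall back on at the end, with a priori different constants $\z_{in}$ and $\z_{out}$ (Definitions~\ref{def:zin} and~\ref{def:zout}), together with the separate conjecture $\z_{in}=\z_{out}$. So there is no paper proof to compare against, and your proposal does not close the gap either. Your reduction is sound: $A_n$ is an interval containing $0$ (though the correct reason is that a walker must traverse every site between $0$ and any visited site, not that the set of \emph{occupied} sites stays connected --- it need not, since the origin can empty out while particles sit far apart), and by reflection symmetry a law of large numbers $b_n/n\to 1/(2\zagg)$ in probability would indeed yield both inclusions. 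But the heart of your argument, the claimed stochastic domination of $T_{j+k}$ by $T_j$ plus an independent copy of $T_k$, is precisely the missing idea, and you offer no mechanism for it: as you yourself note, the only monotonicity the abelian structure supplies (adding particles or forcing wake-ups can only increase the odometer) points the wrong way, and the advance of the frontier past level $j$ depends on the sleeping configuration and toppling history left on $[0,j]$, which is neither independent of the past nor comparable in any known sense to a fresh point source, so no Kingman-type subadditivity is available. The concentration step likewise presupposes a stability estimate under resampling a single stack instruction that is not known for ARW.

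Your fallback pair of bounds is essentially the paper's route: the inner bound is proved by an iterative block scheme comparing $\cS(n\d_0)$ on growing intervals with the killed stabilizations $\cS_I(\one_I)$ via the coupling of Lemma~\ref{le:crucial} and the Strong Markov Property for stacks, and the outer bound by coupling with IDLA on a Bernoulli environment of density just below $\z_{out}$, where $\z_{out}$ is defined through the moment condition $\bfE_\z(w(0)^3)<\infty$ rather than through $\z_c$ itself. The self-averaging you identify as the crux --- that the inner and outer constants coincide and define a single $\zagg$ --- is exactly what remains open (the paper's Conjecture on $\z_{in}=\z_{out}$, and more broadly the conjectured equality $\zagg=\z_c=\zsta$). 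So your write-up is a fair assessment of the state of the art, but it is a strategy with an acknowledged unproved core, not a proof of the statement.
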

	
\textbf{Experiment 3.} Fix a finite interval $I \subset \bbZ$ containing $0$, and write $\ARW(I, \lambda)$ for the finite particle system in which particles exiting $I$ are killed.  Start with one active particle at each site in $I$, and let them perform $\ARW(I, \lambda)$ until no active particles remain. How many of them survive? Write $|\cS_I(\one_I)|$ for the number of sleeping particles in $I$ at the end of this process.  

\begin{Conjecture}[Stationary density $\zsta$]
	There is a constant $\zsta(\bbZ,\lambda)$ such that 
		\[ \lim_{\# I \to \infty} \frac{|\cS_I(\one_I)|}{\# I} = \zsta \]
	in probability.
%	Fixed site version:
%			\[ \lim_{L \to \infty} \pi_L \{ \eta(v)=s \} = \zsta. \]
\end{Conjecture}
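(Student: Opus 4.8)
The plan is to split the statement into two parts: convergence of the expectation, $\bbE|\cS_I(\one_I)|/\#I \to \zsta$ for a constant $\zsta=\zsta(\bbZ,\lambda)$, and concentration of $|\cS_I(\one_I)|$ about its mean; together these yield convergence in probability.

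For the mean, the natural first attempt is approximate subadditivity of $L \mapsto \bbE|\cS_{[1,L]}(\one)|$ exploited through the Abelian property. Writing $I = A \sqcup B$ with $A,B$ adjacent intervals, one stabilises $\one_I$ on $I$ in rounds: first fire only the sites of $A$, diverting every particle that crosses the $A$--$B$ edge into a buffer on the $B$-side --- after this round the restriction to $A$ is exactly $\cS_A(\one_A)$, since ``killed by $\cS_A$'' and ``placed in the buffer'' are the same event --- then fire only the sites of $B$, diverting crossings back into $A$, and iterate. The discrepancy $\bbE|\cS_I(\one_I)| - \bbE|\cS_A(\one_A)| - \bbE|\cS_B(\one_B)|$ is then governed by the flux of particles across the $A$--$B$ edge, and the difficulty is that this flux is not obviously small: particles may cross repeatedly and the net flux can be of order $\#I$. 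A cleaner route may be to first use the standard monotonicities of the ARW odometer --- in the initial configuration and in the domain --- together with the bound $|\cS_I(\one_I)| \le \#I$ (every site holds at most one sleeping particle at the end) to show that the bulk occupation probability $\bbP\bigl(\cS_{[1,L]}(\one)(\lfloor L/2\rfloor) = 1\bigr)$ converges, and then that the convergence is uniform enough over bulk sites to give $\bbE|\cS_{[1,L]}(\one)|/L \to \zsta$.

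For the concentration, the tool of choice is an Efron--Stein / bounded-differences estimate applied to the i.i.d.\ instruction stacks driving the walk-or-sleep dynamics. The obstruction is that resampling the instructions at a single site can, in principle, change $|\cS_I(\one_I)|$ by an amount comparable to $\#I$: one early move can cascade through the whole interval. So the real content is an a priori locality estimate --- that with high probability the set of sites whose stabilised state is changed by a local perturbation has size $o(\#I)$, equivalently that the stabilised configuration carries no long-range order. I would try to extract this from a multi-scale analysis of the odometer, using that sleeping particles damp the propagation of disturbances (the system is ``massive''), aiming for $\var|\cS_I(\one_I)| = o\bigl((\#I)^2\bigr)$.

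The main obstacle I expect is this locality input: it is the crux of the concentration step and is of the same flavour as other stubborn open problems about ARW in one dimension. The mean step is also delicate, since the survivor count is not visibly monotone in the domain --- enlarging the interval injects an extra particle but also makes exiting harder --- so one cannot simply sandwich and must genuinely control the boundary flux (or the bulk occupation probabilities). Finally, identifying the value of $\zsta$, in particular the expected equalities $\zsta = \zagg$ and $\zsta = \z_c(\bbZ,\lambda)$, is a separate and presumably harder matter: it amounts to the assertion that the driven finite system self-organises to criticality.
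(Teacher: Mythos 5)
The statement you are addressing is not a theorem of the paper: it is stated there as an open conjecture (the ``stationary density'' $\zeta_s$), and the paper proves no version of it --- it only proves inner/outer bounds relating the single-source aggregate to the quantities $\zeta_{in}$ and $\zeta_{out}$, which are defined precisely so as to sidestep the existence of the limit you are trying to establish. So there is no paper proof to compare against, and your proposal must stand on its own. It does not: both of its pillars are left as unproved inputs, and you say so yourself. For the mean, the round-robin stabilisation of $A$ and $B$ with a buffer does not give approximate subadditivity, because the particles diverted into the buffer re-enter and re-activate sleepers in $A$, so after the second round the restriction to $A$ is no longer $\cS_A(\one_A)$; the Abelian property guarantees independence of the final state from the firing order for a \emph{fixed} domain and killing rule, but it does not let you identify $\cS_I(\one_I)\big|_A$ with $\cS_A(\one_A)$ up to a boundary-flux error unless you can bound the number of crossings of the $A$--$B$ edge, and that flux can a priori be of order $\#I$ (indeed, controlling it is morally equivalent to controlling the odometer near the interface, which is the hard part). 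The fallback via convergence of the bulk occupation probability is not an argument either: the survivor count is not monotone in the domain, there is no known coupling or ergodic-type theorem giving convergence of $\bbP(\cS_{[1,L]}(\one)(\lfloor L/2\rfloor)=1)$, and you do not supply one.

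For the concentration step, the Efron--Stein / bounded-differences route requires exactly the locality statement you flag as the ``crux'': that resampling one stack changes the stabilised configuration on only $o(\#I)$ sites with high probability. Nothing in the known toolbox (Abelian property, least action principle, monotonicity of the odometer) yields this; the damping effect of sleep instructions is a heuristic, not an estimate, and a multi-scale analysis of the odometer for $\lambda<\infty$ in the driven finite-volume setting is precisely the kind of input whose absence keeps this and the related density conjectures open. So the proposal is a reasonable research programme --- it correctly isolates the two missing ingredients (boundary-flux control for the mean, locality for the variance) --- but it is not a proof, and neither ingredient is reduced to anything currently available.
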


To explain why we call this the ``stationary'' density, consider the law $\pi_I$ of the random configuration of sleepers, $\cS_I(\one_I) : I \to \{0,s\}$.  The probability distribution $\pi_I$ is more canonical than it might seem. It is the unique stationary distribution of the Markov chain with state space $\{0,s\}^{I}$, whose update rule is: add one active particle at $v$ and stabilize. This distribution does not depend on the choice of $v \in I$. These facts are proved in \cite{levineliang}.

What is the relationship between the three densities $\zagg, \z_{c}, \zsta$?  It is tempting to speculate that $\zagg = \z_{c} = \zsta$. These and more detailed conjectures can be found in our companion paper \cite{LS2020}.  

 In the present paper we will prove inequalities of the form
\begin{equation}\label{zzz}
  \z_{out} \leq \zagg \leq \z_{in} 
 \end{equation}
where $\z_{out}$ and $\z_{in}$ are certain variants of the critical and stationary densities, respectively. 
To do this, we will prove inner and outer bounds on the aggregate $\cS (n\delta_0)$. The inner bound involves a comparison with the stationary configurations $\cS_I(\one_I)$, while the outer bound involves comparison with the infinite configurations $\cS(\eta)$ where $\eta$ is an i.i.d.\ Bernoulli configuration of mean $<\z_c$. 

\section{Main result}

We now define the outer and inner densities needed to state our main result. 

Fix $\z \in (0,1)$, and let $(\eta(x))_{x \in \bbZ}$ be independent random variables with $P(\eta(x)=1)=1-P(\eta(x)=0)=\z$. We interpret $\eta$ as a particle configuration consisting of one active particle each $x \in \bbZ$ such that $\eta(x)=1$. Denote by $w : \bbZ \to \bbN \cup \{\infty\}$ the $\ARW^\lambda$ odometer of $\eta$. This is the random function
	\[ w(x) = \text{number of $\ARW^\lambda$ firings needed at $x$ to stabilize $\eta$}. \]
Finally, for an interval $I = [a,b] \subset \bbZ $ write $\partial I =\{ a-1 , b+1 \}$ for the outer boundary of $I$. 

\begin{Definition}[Outer density]\label{def:zout}
For  $w$ as above, let 
	\[ \z_{out} = \z_{out} (\bbZ ,\l ) := \sup \{\z >0 : \bfE_\z  ( w (0)^3)<\infty \} . \]
\end{Definition}

Fix a sleep rate $0 < \lambda \leq \infty$, and an interval $I \subset \bbZ$. Write $\cS_I(\one_I)$ for the $\ARW^\lambda$ stabilization of the all-active configuration $\one_I$ with sink at the (outer) boundary of $I$. This is a random configuration of sleeping particles in $I$. Write $|\cS_I(\one_I)|$ for the total number of particles in this configuration.

\begin{Definition}[Inner density]\label{def:zin}
For any interval $I \subseteq \bbZ$ we let 
	\[ \z_{in, I} = \z_{in, I}(\bbZ , \l ) := \inf \left\{  \z >0 : \bbP (|\cS_I(\one_I)| > \z \# I ) \leq ( \# I)^{-20} \right\} , \]
and define 
	\[ \z_{in} := \limsup_I \z_{in,I}  . \]
\end{Definition}

Our main result addresses the question in the title: How far do activated random walkers spread from a single source? Starting with $n$ active particles at $0$, let $A_n$ be the random set of sites in $\bbZ$ that fire at least once during $\ARW^\l$ stabilization.  Write $B_r = [-r,r] \cap \bbZ$.

\begin{Theorem}\label{th:intro}
Let   $\z_{out} = \z (\bbZ, \l )$ and $\z_{in} = \z_{in}(\bbZ, \l )$ be as in Definitions \ref{def:zout} and \ref{def:zin} respectively, and assume $ \z_{out} >0$. Then for all $\varepsilon>0$ it holds 
	\[ \bbP \left( 
%	\# A_n \geq \frac{n}{\z_{in}} (1-\varepsilon ) 
	\# A_n \geq \frac{n}{\z_{in} + \e } 
	\mbox{ and } 
	A_n \subseteq  B_{\frac{n}{2\z_{out}} (1+\varepsilon )} \mbox{ eventually in }n \right) =1.\]
\end{Theorem}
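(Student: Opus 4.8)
I will prove the two halves of the statement separately, each by a Borel--Cantelli argument: for the inner bound I show that $\bbP\big(\#A_n<\frac n{\z_{in}}(1-\varepsilon)\big)$ is summable in $n$, and for the outer bound that $\bbP\big(A_n\not\subseteq B_{\frac n{2\z_{out}}(1+\varepsilon)}\big)$ is summable. The structural fact used throughout is that $A_n$ is an \emph{interval} containing $0$: a nearest--neighbour walker started at $0$ visits every intermediate site, and any site ever holding an active particle must fire (if only through a ``sleep'' instruction) before stabilization is complete; hence the visited set is an interval, it coincides with $A_n$, and it carries all $n$ particles of the terminal configuration. In particular, if $A_n\subseteq J$ for an interval $J$, then no particle leaves $J$, so stabilizing $n\delta_0$ on $\bbZ$ coincides with stabilizing it on $J$ with a sink on $\partial J$, and $|\cS_J(n\delta_0)|=n$.

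\textbf{Inner bound.} Put $\ell_n=\lceil\frac n{\z_{in}}(1-\varepsilon)\rceil$. Since $0\in A_n$ and $A_n$ is an interval, $\{\#A_n\le\ell_n\}\subseteq\bigcup_J\{A_n\subseteq J\}$ over the at most $\ell_n+1$ intervals $J\ni0$ with $\#J=\ell_n+1$, and on each such event $|\cS_J(n\delta_0)|=n$ by the remark above. By the abelian property, stabilizing $n\delta_0$ on $J$ equals adding a particle at $0$ and stabilizing $n$ times in succession, so $\cS_J(n\delta_0)$ is the $n$-th iterate of this Markov chain started from the empty configuration; as shown in \cite{levineliang} that iterate increases stochastically to $\pi_J$, the law of $\cS_J(\one_J)$, whence $|\cS_J(n\delta_0)|\preceq|\cS_J(\one_J)|$ and $\bbP(A_n\subseteq J)\le\bbP(|\cS_J(\one_J)|\ge n)$. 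For $n$ large, $\#J=\ell_n+1\le\frac n{\z_{in}}(1-\varepsilon/2)$, so $n\ge\z''\#J$ for some $\z''$ with $\z_{in}<\z''<\z_{in}/(1-\varepsilon/2)$; since $\z_{in}=\limsup_J\z_{in,J}$ we have $\z_{in,J}<\z''$ for all $J$ large enough, so $\bbP(|\cS_J(\one_J)|>\z''\#J)\le(\#J)^{-20}$ by Definition \ref{def:zin}. Thus $\bbP(A_n\subseteq J)=O(n^{-20})$, and summing over the $O(n)$ admissible $J$ gives $\bbP(\#A_n\le\ell_n)=O(n^{-19})$, which is summable; Borel--Cantelli yields $\#A_n>\frac n{\z_{in}}(1-\varepsilon)$ eventually, a.s.

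\textbf{Outer bound.} Fix $\z<\z_{out}$ (close to $\z_{out}$, chosen below); since the set of $\z$ with $\bfE_\z(w(0)^3)<\infty$ is an interval by monotonicity of the odometer in $\z$, we have $\bfE_\z(w(0)^3)<\infty$ for the $\ARW^\lambda$ odometer $w$ of an i.i.d.\ Bernoulli$(\z)$ configuration $\eta$. Set $m_n=\lceil\frac n{2\z}(1+\delta)\rceil$ for a small $\delta>0$, so $\#B_{m_n}\ge\frac n\z(1+\delta)$, and let $E_n$ be the event that $\eta$ has at least $n$ particles in $B_{m_n}$, so $\bbP(E_n^c)\le e^{-cn}$ by a Chernoff bound. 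The crux is a monotonicity/coupling comparison with the $\ARW^\lambda$ stabilization of $\eta$: augmenting $n\delta_0$ by $\eta$ and using the abelian property (disperse the $n$ particles on the scale $m_n$ into the configuration $\eta$ determines there — this is where $E_n$ enters) together with odometer monotonicity, one bounds the reach of $A_n$ by $m_n$ plus the excess reach of $\eta$'s stabilization, and the latter can exceed a further factor $1+\delta'$ only where $\eta$'s odometer is of order $n$. By translation invariance and Definition \ref{def:zout}, $\bbP(w(y)\ge cn)\le\bfE_\z(w(0)^3)/(cn)^3$, so
\[
\bbP\big(A_n\not\subseteq B_{(1+\delta')m_n}\big)\ \le\ e^{-cn}+C n\cdot\frac{\bfE_\z(w(0)^3)}{(cn)^3}\ =\ e^{-cn}+O(n^{-2}),
\]
which is summable; choosing $\delta,\delta'$ small and $\z\ge\z_{out}/(1+\delta)$ makes $(1+\delta')m_n\le\frac n{2\z_{out}}(1+\varepsilon)$ eventually, so Borel--Cantelli gives $A_n\subseteq B_{\frac n{2\z_{out}}(1+\varepsilon)}$ eventually, a.s. Intersecting the two almost-sure events proves the theorem.

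\textbf{Main obstacles.} The serious difficulty is the monotonicity/coupling comparison behind the outer bound: one must make precise the sense in which a single heavy source is dominated, far from the origin, by a light Bernoulli source — correctly bookkeeping the sleep instructions in the stage-by-stage abelian decomposition — and extract from ``$A_n$ overshoots $B_{m_n}$'' a genuinely large (of order $n$), not merely positive, lower bound on the Bernoulli odometer at a bounded window just outside $B_{m_n}$, so that the third-moment input gives $O(n^{-3})$ per site and the union over the $O(n)$ sites is summable. On the inner side the only nontrivial input is the domination $|\cS_J(n\delta_0)|\preceq|\cS_J(\one_J)|$ — that the stationary interval configuration is extremal — which is exactly why $\z_{in}$ is defined through the configurations $\cS_J(\one_J)$.
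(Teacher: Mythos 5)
Both halves of your argument have a genuine gap, and in each case it is precisely at the step you yourself flag as the ``main obstacle''. For the inner bound, everything rests on the domination $|\cS_J(n\delta_0)|\preceq|\cS_J(\one_J)|$, which you justify by asserting that the driven chain started from the empty configuration ``increases stochastically to $\pi_J$'' and attribute to \cite{levineliang}. That reference proves that $\pi_J$ is the unique stationary distribution and is independent of the driving site (plus mixing bounds); it does not prove stochastic monotonicity of the iterates, and no such monotonicity for ARW stabilized configurations is known --- this is exactly the kind of attractiveness property that is missing for this model. Note also that your target interval $J$ has $\#J\approx n/\z_{in}>n$, so one cannot even fall back on the paper's exact-coupling trick (fill the interval by IDLA first, then compare with $\cS_J(\one_J)$ via the Strong Markov Property), because $n$ particles cannot fill $J$. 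This is why the paper works instead with a shorter interval $I_1$ of length $(1-\e)n$, proves an IDLA filling lemma and an a priori odometer bound, identifies $\cS(n\d_0)\big|_{I_1}$ with a killed stabilization of $l\d_a+\one_{I_1}+r\d_b$ after a union bound over the boundary odometer values (Lemma \ref{le:crucial}), observes that the boundary particles cannot settle in a fully occupied interval, and then \emph{iterates}, growing the interval using the particles that leak out, a bounded number $K_0(\e,\z_{in})$ of times. Your one-step reduction buries all of this in an unproven monotonicity claim.

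For the outer bound, your first phase agrees with the paper: disperse the $n$ particles by IDLA on the Bernoulli$(\z)$ environment and use abelianness to get $u_n\le v_n+w$ pointwise, so $u_n\le w$ outside $B_{m_n}$ (the quantitative confinement of $v_n$ itself needs a $p$-IDLA shape theorem, which the paper proves in the appendix, not just a Chernoff count of open sites). But the concluding step is missing: if $A_n$ overshoots $B_{(1+\d')m_n}$, the pointwise bound only forces $w\ge 1$ at every site of the overshoot band, not $w\gtrsim n$ at some site; $\bbP(w(y)\ge 1)$ is not small, the events at different $y$ are strongly dependent, and the third-moment hypothesis gives you nothing here, so the displayed estimate $e^{-cn}+O(n^{-2})$ is unsupported. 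The paper closes this differently: it uses the third moment only to show $\sup_{x\in B_r}w(x)\le r/(\log r)^3$ with probability $1-r^{-3/2}$, hence at most $n/(\log n)^2$ particles ever cross $r_-=\frac{n}{2\z_{out}}(1+\e)$, and then runs a separate trap argument (marked sites spaced $C_\l\log n$ apart, as in Lemma \ref{le:apriori}) showing that so few particles almost surely fall asleep before traveling the remaining distance $n\e/\z_{out}$, so $u_n(\pm r)=0$. Without some substitute for this last mechanism, your outer bound does not go through.
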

%\textcolor{blue}{
%\begin{Remark}
%The assumption $ \z_{in} >0$ can be dropped at the price of slightly changing the statement. Let us first show that if $\z_{in} =0$ then $\z_{out}=0 $. Indeed, suppose for a contradiction that $\z_{in} =0$ but $\z_{out}>0$. Then  proof of Theorem \ref{th:intro} can be adapted to show that for all $\e >0$ it holds 
%	\begin{equation}\label{eq:shape}
%	\bbP \left( \# A_n \geq \frac{n}{\e} 
%	\mbox{ and } 
%	A_n \subseteq  B_{\frac{n}{2\z_{out}} (1+\varepsilon )} \mbox{ eventually in }n \right) =1.
%	\end{equation}
%Thus it must be $\z_{out} \leq \e ( 1+\e ) $ and hence, since $\e$ is arbitrary, $\z_{out}=0$. In the case $\z_{in} = \z_{out} =0$ the proof of the inner bound in Theorem \ref{th:intro} works with minor changes to show that for all $\e >0$  one has 
%	\[ \bbP ( \# A_n \geq n/\e \mbox{  eventually in }n) =1 . \]
%\end{Remark}
%}
Note that $A_n$ is an interval containing the origin, so the above result tells us that $A_n$ contains an interval (possibly not centered!)\ of length $ \frac{n}{\z_{in} + \e }$ and is contained in a (centered) interval of length $\frac{n}{\z_{out}} (1+\varepsilon )$. As a byproduct, we obtain the following inequality.

\begin{Corollary}
$ \z_{in} \geq \z_{out}  $. 
\end{Corollary}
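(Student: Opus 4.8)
The plan is to deduce the Corollary immediately from Theorem~\ref{th:intro}, with no extra work beyond an elementary limit argument. The only structural input is the remark made right after the theorem, namely that $A_n$ is an interval of $\bbZ$ containing the origin, so the outer containment $A_n \subseteq B_{n(1+\varepsilon)/(2\z_{out})}$ gives a matching \emph{upper} bound on $\#A_n$ of the same linear order as the \emph{lower} bound $\#A_n \ge n(1-\varepsilon)/\z_{in}$.

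Concretely, I would fix $\varepsilon>0$ and work on the probability-one event furnished by Theorem~\ref{th:intro}. On that event, for all large $n$ one has simultaneously $\#A_n \ge \frac{n}{\z_{in}}(1-\varepsilon)$ and $A_n \subseteq B_{\frac{n}{2\z_{out}}(1+\varepsilon)}$; since $\# B_r \le 2r+1$, the latter yields $\#A_n \le \frac{n}{\z_{out}}(1+\varepsilon)+1$. Chaining these bounds and dividing by $n$ gives $\frac{1-\varepsilon}{\z_{in}} \le \frac{1+\varepsilon}{\z_{out}} + \frac1n$ for all large $n$; letting $n\to\infty$ (the event is nonempty, having probability $1$, so this produces a genuine deterministic inequality) gives $\frac{1-\varepsilon}{\z_{in}} \le \frac{1+\varepsilon}{\z_{out}}$. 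Here the hypothesis $\min\{\z_{in},\z_{out}\}>0$ is used precisely to make the reciprocals finite. Finally, sending $\varepsilon\downarrow 0$ yields $\frac1{\z_{in}} \le \frac1{\z_{out}}$, i.e.\ $\z_{out}\le\z_{in}$.

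I do not anticipate any real obstacle: the whole point of Theorem~\ref{th:intro} is to sandwich $\#A_n$ between two quantities asymptotic to $n/\z_{in}$ and $n/\z_{out}$, and the Corollary is just the statement that the first asymptotic constant is at least the second, which must hold since $\#A_n$ cannot exceed itself. If anything merits a line of justification, it is only the passage from the almost-sure ``eventually in $n$'' statement to a deterministic inequality between the constants $\z_{in}$ and $\z_{out}$, and this is immediate.
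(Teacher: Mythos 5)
Your argument is exactly the paper's: the Corollary is stated as an immediate byproduct of Theorem~\ref{th:intro} (the paper gives no separate proof), and your chaining of the lower bound $\#A_n \ge \frac{n}{\z_{in}}(1-\varepsilon)$ with the containment $A_n \subseteq B_{\frac{n}{2\z_{out}}(1+\varepsilon)}$, followed by $n\to\infty$ and $\varepsilon\downarrow 0$, is precisely the intended deduction. The only caveat, shared by the paper, is that the argument operates under the theorem's standing hypothesis $\min\{\z_{in},\z_{out}\}>0$ (the case $\z_{out}=0$ being trivial), which you correctly identify as where that hypothesis is used.
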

\begin{proof}
By Theorem \ref{th:intro} we have that almost surely for all $\e \in \bbQ _{>0}$ it holds 
	\[ \frac n{\z_{in} + \e } \leq \# A_n \leq \frac{n}{\z_{out} }(1+\e ) \]
eventually in $n$. This forces 
	\[ \z_{in} + \e  \geq \frac{\z_{out} }{1+\e } , \]
and sending $\e \to 0$ along rationals gives the desired conclusion. 
\end{proof}

In fact we conjecture equality.

\begin{Conjecture}
$\z_{in}=\z_{out}$.
\end{Conjecture}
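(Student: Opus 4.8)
Since $\z_{in}\ge\z_{out}$ is the preceding Corollary, only the reverse inequality $\z_{in}\le\z_{out}$ remains. I see no shortcut that avoids the expected coincidence of the critical density of Experiment~1 with the stationary density of Experiment~3, so the plan is threefold: (i) show that the limit in Conjecture~2 exists (i.e.\ $\zsta=\zsta(\bbZ,\l)$ is well defined) and upgrade it to $\z_{in}=\zsta$; (ii) show $\z_{out}=\z_c$, where $\z_c=\z_c(\bbZ,\l)$ is the critical density of \cite{rolla2019universality}; and (iii) prove $\zsta=\z_c$. Combined with the sandwich $\z_{out}\le\zagg\le\z_{in}$ of Theorem~\ref{th:intro}, steps (i)--(iii) would simultaneously settle Conjecture~\ref{c.zagg} and yield $\zagg=\z_c=\zsta=\z_{in}=\z_{out}$.

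\emph{Step (i): the inner density equals the stationary density.} Writing $N_I:=|\cS_I(\one_I)|$, the needed input is a polynomial-in-$\#I$ concentration bound $\bbP(|N_I-\bbE N_I|>\e\#I)\le(\#I)^{-20}$ together with convergence of $\bbE N_I/\#I$; both the existence of $\zsta$ and the identity $\z_{in}=\zsta$ follow from these. I would obtain the concentration from the abelian property: $N_I$ is a function of the driving data (walk increments and exponential sleep clocks), and re-sampling a single clock changes the stabilized configuration only inside the avalanche that resampling triggers. The crux is then a tail bound on the size of that avalanche; with such a bound one truncates on the rare macroscopic-avalanche event and controls the bulk by an Efron--Stein or Azuma-type estimate. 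This step looks technical but plausible.

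\emph{Step (ii): the outer density equals the critical density.} Since a configuration with $\z>\z_c$ explodes and hence has $w(0)=\infty$ a.s., we have $\z_{out}\le\z_c$ for free; equality amounts to the assertion that for every $\z<\z_c$ the Bernoulli($\z$) odometer satisfies $\bfP_\z(w(0)>t)=o(t^{-3})$. One in fact expects stretched-exponential decay of this tail throughout the subcritical phase, which would be far more than enough. I would attempt this via the least-action (Diaconis--Fulton) representation of the odometer, showing that $w(0)>t$ forces an atypically particle-rich interval near the origin whose length carries the required tail, i.e.\ a multiscale or percolation-type estimate. Among the three steps this is the one I am least confident existing techniques can deliver in $d=1$, and it may require a new idea.

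\emph{Step (iii), and the main obstacle.} The identity $\zsta=\z_c$ is precisely the central open conjecture for Activated Random Walk on $\bbZ$ (see \cite{LS2020, rolla2020}): widely believed but unproven, and any proof of $\z_{in}=\z_{out}$ along the route above subsumes it. So I expect this to be the real barrier, not the robustness reductions (i)--(ii). The only alternative I can see is to strengthen Theorem~\ref{th:intro} to its reverse --- a matching lower bound $\#A_n\gtrsim n(1-\e)/\z_{out}$ and outer bound $A_n\subseteq B_{n(1+\e)/(2\z_{in})}$ --- which would collapse the sandwich directly; but each reversed inequality seems to require showing that the bulk of the single-source aggregate $\cS(n\delta_0)$ locally resembles a stationary ($\cS_I(\one_I)$-type) configuration near the center and a critical ($\cS(\eta)$-type) configuration near the frontier, and establishing that comparison is essentially the same difficulty. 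In short, I would not expect to prove this conjecture without resolving the density identity that is the chief open problem in the subject.
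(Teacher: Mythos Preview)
The statement you are attempting to prove is not a theorem in the paper but an explicit \emph{Conjecture}: the paper offers no proof, only the remark that together with Theorem~\ref{th:intro} it would pin down $A_n$ as an asymptotically centered interval. So there is no ``paper's own proof'' to compare your proposal against.

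Your proposal is not a proof either, and you are candid about this. What you have written is a research program decomposing $\z_{in}=\z_{out}$ into three subproblems: (i) $\z_{in}=\zsta$ via concentration of $|\cS_I(\one_I)|$, (ii) $\z_{out}=\z_c$ via subcritical odometer tail bounds, and (iii) $\zsta=\z_c$. You correctly identify that (iii) is the central open conjecture for ARW on $\bbZ$ (cf.\ \cite{LS2020,rolla2020}), and that (i)--(ii), while plausible, are themselves nontrivial and not established in the literature. Your own closing assessment --- that you would not expect to prove this without resolving the density identity that is the chief open problem in the subject --- is exactly right, and is the honest conclusion. The alternative route you sketch (reversing the roles of $\z_{in}$ and $\z_{out}$ in Theorem~\ref{th:intro}) is also correctly diagnosed as facing the same essential obstacle.

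In short: there is no gap to name because there is no claimed proof, only a well-reasoned explanation of why the conjecture is open and what a proof would require.
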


Together with Theorem~\ref{th:intro}, this conjecture would imply that the random set $A_n$ is asymptotically a centered interval, in that
	\[ \bbP \left( B_{(1-\e)r} \subset A_n \subset B_{(1+\e)r} \text{ eventually} \right) = 1 \]
where $r = \frac{n}{2\z_{out}}$.

\begin{Remark}
For any $\gamma >0$ we can show that the event in Theorem \ref{th:intro} holds with probability exceeding $1-n^{-\gamma}$ for $n$ large enough, at the price of changing the definition of the critical densities $\z_{in}$ and $\z_{out}$ to make them depend on $\gamma$. See Remarks \ref{rem1} and \ref{rem2} for details. 
\end{Remark}

It is worth mentioning an edge case which plays an important role in our proof of Theorem~\ref{th:intro}. Activated Random Walk with sleep rate $\lambda=\infty$ goes by the name \textbf{Internal DLA}. It has the simple description that each random walker continues until it reaches an unoccupied site, which it occupies forever. The case $\lambda=\infty$ of Theorem~\ref{th:intro} follows from the shape theorem of Lawler-Bramson-Griffeath \cite{lawler1992internal}. Vastly more is known about Internal DLA, including fluctuations \cite{asselah2013logarithmic, jerison2012logarithmic} and bounds on mixing \cite{levine2019long,silvestri2020internal}.  We find it remarkable that simply by tuning the sleep rate $\lambda <\infty$, the model becomes so much more difficult that even the shape theorem in one dimension remains unproved!

\subsection{Organization of the paper} Section~\ref{s.defs} gives precise definitions and reviews two basic features of ARW that will be used in the proof of Theorem~\ref{th:intro}, the Abelian and Strong Markov properties.  The inner and outer bounds are proved separately and independently in Sections~\ref{s.inner} and~\ref{s.outer}. The outer bound is proved via a coupling with Internal DLA in a random environment, while the inner bound follows from a block decomposition similar to the one introduced in \cite{asselah2019diffusive,basu2018non}.  Some bounds on variants of Internal DLA will be needed, and we collect their proofs in Appendix~\ref{appendix}.

\section{Precise definitions; Abelian property; Strong Markov property}
\label{s.defs}

Let $\bbN_s := \bbN \cup \{ s \} $ denote the ordered set 
	\[ 0<s<1<2<3\cdots \]
An Activated Random Walks (in short ARW) configuration on $\bbZ$ is a map 
	\[ \eta : \bbZ \longrightarrow \bbN_s \]
where $\eta (x)$ denotes the number of active particles at $x$ if $\eta (x) \geq 1$, a sleeping particle at $x$ if $\eta (x) = s$, and no particle at $x$ if $\eta (x) =0$. 
A particle can only be sleeping if alone at its site, and it gets instantaneously reactivated if an active particle steps on it. 
Set $|s|=1$, so that $|\eta (x)| $ counts the number of particles at $x$, regardless of their state. 
\begin{Definition}
A configuration $\eta $ on $\bbZ$ is called \emph{stable} at $x$ if $\eta (x) \in \{0,s\}$, and it is \emph{unstable} at $x$ otherwise. We say that $\eta $ is \emph{stable} if it is stable at all $x\in \bbZ$. 
\end{Definition}

Next we describe how to stabilize $\eta$ by a sequence of moves called firings. Let $\lambda \in (0,\infty ]$ be a fixed parameter, that we refer to as the \emph{sleep rate}. 
To each site $x \in \bbZ$ we associate an infinite stack $\rho_x = (\rho_x (k))_{k\geq 1}$ of independent instructions with common distribution 
	\begin{equation} \label{instructions}
	 \rho_x (1) = 
	\begin{cases} 
	\; s & \mbox{ with probability } \frac{\l }{1+\l } \\
	x-1 & \mbox{ with probability } \frac{1}{2(1+\l )} \\
	x+1 & \mbox{ with probability } \frac{1}{2(1+\l )} .
	\end{cases}
	\end{equation}
The instructions $s, x-1, x+1$ are interpreted respectively as: ``fall asleep if alone'', ``step left'', ``step right''.  If $x \in \bbZ$ with $\eta(x) \geq 1$ we can \emph{fire} $x$ by applying the first unused instruction in the stack $\rho_x$.  
The effect of firing $x$ is that \emph{one active particle at $x$ steps to a uniform neighbour with probability $1/(1+\l)$, and otherwise it falls asleep if alone} (that is, if no other particles are present at $x$; note that applying an $s$ instruction has no effect if there is more than one particle at $x$).
We denote the resulting configuration by $\tau_x \eta$.  

%Toss a coin with head probability $\lambda /(1+\lambda)$, independently of everything else. If head, a particle at $x$ tries to fall asleep. It does so if alone at the site, i.e.\ $\eta (x) =1$, and otherwise nothing happens. If tail, choose a uniform neighbour $y \sim x$ and move a particle from $x$ to $y$. We refer to this procedure as firing the configuration $\eta $ at $x$, and call the resulting configuration $\tau_x \eta$. 

A firing $\tau_x$ is said to be \emph{legal} for $\eta$ if $\eta (x) \geq 1$. Write $\alpha = (x_1 , x_2 \ldots x_k)$ for a finite sequence of sites, and assume that the corresponding sequence of firings $\tau_{x_1} , \tau_{x_2} \ldots \tau_{x_k}$ is legal, that is $\tau_{x_j}$ is legal for $\tau_{x_{j-1}}  \tau_{x_{j-2}}\cdots \tau_{x_1} \eta $, for all $j\leq k$. Then we write 
	\[ \tau_\alpha \eta := \tau_{x_k}  \tau_{x_{k-1}}  \cdots  \tau_{x_1} \eta  , \]
and say that $\alpha $ \emph{stabilises} $\eta$ if $\tau_\alpha \eta $ is stable.

To each legal sequence of firings $\alpha$ we associate the odometer function $u_\alpha : \bbZ \to \bbN$ defined by 
	\[ u_\alpha (x) = \sum_{j=1}^k \one (x_j = x) , \]
which counts the number of occurrences of $x$ in $\alpha$, or, equivalently, the number of stack instructions used at $x$. 

It will sometimes be useful to fire vertices that contain a sleeping particle. Firing $x$ such that $\eta (x) =s$ consists of forcing the particle at $x$ to wake up and evolve according to the first unused instruction at $x$. Following \cite{rolla2019universality,rolla2015activated}, we say that if $\eta (x) \geq s$ then firing $x$ is \emph{acceptable}. Note that all legal firings are acceptable, while if $\eta (x) =s$ then firing $x$ is acceptable but not legal.

We collect below some useful properties of the ARW dynamics, namely the Abelian Property and the Least Action Principle, for which we refer the reader to \cite{rolla2015activated} (or to \cite[Lemma 4.3]{abelian1} where they are proved in the more general setting of abelian networks).

\begin{Proposition}[\cite{rolla2015activated}, Section 2.2]\label{pr:facts}
For any fixed realization $(\rho_x(k))_{x\in \bbZ , k\geq 1}$ of instruction stacks, the following hold. 
\begin{enumerate}
\item[1.] (Local Abelian Property) If $\alpha , \beta $ are acceptable sequences of firings for a particle configuration $\eta$ such that $u_\alpha = u_\beta$, then $\tau_\alpha \eta = \tau_\beta \eta$.  The final configuration $\tau_\alpha \eta$ depends on $\alpha$ only through $u_\alpha$. 
\item[2.] (Least action principle) If $\alpha$ is a finite acceptable sequence of firings that stabilises  $\eta$, and $\beta$ is any finite legal sequence for $\eta$, then $u_\beta \leq u_\alpha$.
\item[3.] (Abelian Property) If $\alpha , \beta$ are two legal stabilising sequences for $\eta$, then $u_\alpha = u_\beta $, and in particular $\tau_\alpha \eta  = \tau_\beta \eta  $.
\end{enumerate}
\end{Proposition}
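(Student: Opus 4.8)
The plan is to derive all three parts from two elementary facts about single firings. The first is a \emph{monotonicity of fireability}: if $x\neq y$ and $\eta(y)\geq s$, then firing $y$ cannot decrease the number of particles at $x$ (a firing at $y$ either moves one particle off $y$ or puts the lone particle at $y$ to sleep), so if also $\eta(x)\geq s$ then $(\tau_y\eta)(x)\geq s$; in words, an acceptable firing at $x$ remains acceptable after firing any other site. The second is a \emph{two-site exchange lemma}: if $x\neq y$, $\eta(x)\geq s$ and $\eta(y)\geq s$, then $\tau_x\tau_y\eta=\tau_y\tau_x\eta$. I would prove this by a case analysis on the two instructions consumed at $x$ and at $y$ — these are the same in either order, since firing one site does not touch the stack of the other: if neither instruction is $s$ the two firings are commuting additions on the particle-count vector and leave every affected site fully active; if one instruction is $s$, one checks the finitely many positions of $x$ relative to $y$ and verifies that the state-dependent ``fall asleep if alone'' has the same net effect regardless of order. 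I expect this case check to be the only genuinely fiddly point, precisely because the effect of an $s$-instruction depends on the current occupancy of the site; the monotonicity fact is what keeps it under control and, moreover, makes the hypotheses of the exchange lemma self-reproducing.

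From these two facts I would establish a \emph{bring-to-front claim}: if $\sigma$ is acceptable for a configuration $\zeta$ with odometer $w$, and $v$ is a site with $w(v)\geq 1$ which is acceptable for $\zeta$, then there is an acceptable sequence for $\zeta$ with the same odometer $w$ that begins by firing $v$. To see this, take the first occurrence of $v$ in $\sigma$ and move it one step to the left at a time; each transposition swaps $v$ with a site $\neq v$, monotonicity makes both orders of that adjacent pair acceptable at the configuration reached so far, and the exchange lemma shows the swap changes neither the odometer nor the configuration handed to the remaining suffix, so the swapped sequence is again acceptable with the same odometer and final configuration. A nearly identical induction on the length of $\rho$, again invoking the claim, gives an \emph{extension lemma}: if $\alpha$ is acceptable for $\eta$ with odometer $u$ and $\rho$ is acceptable for $\eta$ with $u_\rho\leq u$, then $\rho$ can be completed to an acceptable sequence for $\eta$ with odometer exactly $u$.

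Part 1 (Local Abelian Property) then follows by induction on $|u|:=\sum_{x}u(x)$, where $u=u_\alpha=u_\beta$. If $\alpha$ and $\beta$ begin with the same site, delete it and apply the inductive hypothesis at $\tau_x\eta$. Otherwise let $x=\alpha_1\neq\beta_1=y$; both are acceptable for $\eta$, so by the exchange lemma $\tau_x\tau_y\eta=\tau_y\tau_x\eta$. Using the bring-to-front claim at $\tau_x\eta$ (an odometer of size $|u|-1$), rewrite $\alpha$ as an acceptable sequence for $\eta$ whose first two firings are $x$ then $y$; similarly rewrite $\beta$ so that its first two firings are $y$ then $x$. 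By the inductive hypothesis applied once at $\tau_x\eta$ (resp.\ $\tau_y\eta$) the rewriting does not change the final configuration, and comparing the two rewritten sequences beyond their common prefix $\tau_x\tau_y\eta$ is another application of the inductive hypothesis at size $|u|-2$; hence $\tau_\alpha\eta=\tau_\beta\eta$.

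Part 2 (Least Action Principle) I would prove by induction on the length of the legal sequence $\beta=(b_1,\dots,b_n)$, maintaining $u_{(b_1,\dots,b_j)}\leq u_\alpha$. Assuming this for $j-1$, suppose for contradiction that $u_{(b_1,\dots,b_{j-1})}(v)=u_\alpha(v)$ with $v=b_j$. By the extension lemma, complete $(b_1,\dots,b_{j-1})$ to an acceptable sequence for $\eta$ with odometer $u_\alpha$; by Part 1 it ends at the stable configuration $\tau_\alpha\eta$, so its tail $\gamma$ stabilizes $\tau_{(b_1,\dots,b_{j-1})}\eta$ while firing $v$ exactly $u_\alpha(v)-u_{(b_1,\dots,b_{j-1})}(v)=0$ times. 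But $\tau_{(b_1,\dots,b_{j-1})}\eta$ has an active particle at $v$ (as $b_j$ is legal for it), and a configuration can never become stable at $v$ without ever firing $v$ — firings at the neighbours of $v$ only add particles at $v$ and never put the particle there to sleep. This contradiction gives $u_{(b_1,\dots,b_j)}(v)\leq u_\alpha(v)$ and completes the induction, so $u_\beta\leq u_\alpha$. Finally, Part 3 is immediate: if $\alpha$ and $\beta$ are both legal and stabilizing, Part 2 applied in each direction gives $u_\alpha=u_\beta$, and then Part 1 gives $\tau_\alpha\eta=\tau_\beta\eta$.
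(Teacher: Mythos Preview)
The paper does not give its own proof of this proposition; it simply refers the reader to \cite{rolla2015activated} (Section~2.2) and to \cite{abelian1} (Lemma~4.3). Your argument is correct and is essentially the standard route taken in those references: establish commutation of two acceptable firings at distinct sites (your exchange lemma, with monotonicity guaranteeing both orders remain acceptable), use adjacent transpositions to bring any desired acceptable firing to the front of a sequence without changing the odometer or the configuration handed to the suffix, and then run exactly the two inductions you describe for Parts~1 and~2, with Part~3 following immediately.
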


Note that, in particular, the above result tells us that the stabilization of a given particle configuration $\eta$ with stacks $\rho$ does not depend on the order of the firings. \\

We will want to perform a random number of firings and argue that, under some conditions, the stacks of unused instructions have the same distribution as the original ones.  This will require showing that ARW stacks satisfy a form of the Strong Markov Property. To state it, let $(\rho_z (k))_{z\in \bbZ, k\geq 1}$ be a collection of independent random variables such that for each $z$ the sequence $(\rho_z(k))_{k \geq 1}$ is i.i.d.\ with distribution \eqref{instructions}. Write $\mathbf{FS}$ for the set of finitely supported functions $f: \bbZ \to \bbN$.  Write $\rho^f$ for the shifted collection
	\[ \rho^f := (\rho_z (k+f(z)))_{z \in \bbZ, k \geq 1}. \]
Let $\cF_0$ denote a $\sigma$-field independent of the $\rho_z (k)$'s, and 
write $\cF_f$ for the $\sigma$-field generated by $\cF_0$ and  $\rho_z(k)$ for $z \in \bbZ$ and $1 \leq k \leq f(z)$.  

\begin{Proposition}[Strong Markov Property For i.i.d.\ Stacks] \label{le:SMP}
Let $F: \bbZ \to \bbN$ be a random function such that $P(F \in \mathbf{FS})=1$ and $\{F=f\} \in \mathcal{F}_f$ for all $f \in \mathbf{FS}$.
On the event $\{F \in \mathbf{FS}\}$, the shifted stacks $\rho^F$ are independent of $(\rho_z (k))_{z \in \bbZ, k \leq F(z)}$, and $\rho^F$ has the same distribution as $\rho$. 
\end{Proposition}

\begin{proof}
Write $\cF_F$ for the $\sigma$-field of events $A$ satisfying $A \cap \{F = f\} \in \cF_f $ for all $f \in \mathbf{FS}$.  We have to show for any finite sequence of distinct pairs $(z_1,k_1), \ldots, (z_n,k_n)$ in $\bbZ \times \bbN$, any sequence of instructions $i_1, \ldots, i_n$, and any event $A \in \cF_F$, it holds
	\[ \bbP (A, \rho^F_{z_1 } (k_1 ) = i_1 , \ldots , \rho^F_{z_n } (k_n ) = i_n ) 
	= \bbP (A) \prod_{j=1}^n \bbP (  \rho_{z_j } (1 ) = i_j ) . \]
Since $\mathbf{FS}$ is countable, 
	\[
	 \bbP (A, \, \rho^F_{z_1 } (k_1 ) = i_1 , 
	  \ldots , \rho^F_{z_n } (k_n ) = i_n ) 
	  = \sum_{f \in \mathbf{FS}} \bbE \left[ \one (A, F = f, \rho^f_{z_1 } (k_1 ) = i_1 , \ldots , \rho^f_{z_n } (k_n ) = i_n )  \right]  \]
where the sum is over all finitely supported $f: \bbZ \to \bbN$. Writing the expectation on the right as the expectation of the conditional expectation $E[ \cdot | \mathcal{F}_f]$, we can pull out the indicator $\one (A , F= f )$, obtaining
	\[ \begin{split}
	 \sum_{f} \bbE & \big[  \one (A , F = f ) 
	 \bbE [ \one (\rho_{z_1} (k_1 + f(z_1) ) = i_1 , \ldots , \rho_{z_n} (k_n + f(z_n)) = i_n ) | \cF_f ] \big]
	 \\ & =  
	 \sum_{f} \bbP (A , F = f ) \prod_{j=1}^n \bbP (  \rho_{z_j } (1 ) = i_j )
	\\ & = \bbP (A) \prod_{j=1}^n \bbP (  \rho_{z_j } (1 ) = i_j ) . \qed
	\end{split} \]
\renewcommand{\qedsymbol}{}
\end{proof}

We will use the Strong Markov Property as follows. Fix a particle configuration $\eta_0 :\bbZ \to  \bbN_s$, and consider a sequence of configurations $(\eta_k)_{k \geq 1}$ in which each $\eta_k$ is obtained by firing $\eta_{k-1}$ at a single site $x_k$, where $x_k$ may depend on the initial condition $\eta_0$, the portion of the stack instructions explored in the first $k-1$ steps, and perhaps some independent randomness, encoded in $\cF_0$. Continue until some stopping time $T$, and let
	\[ F(x) = \# \{1 \leq k \leq T \,:\, x_k=x \} \]
be the number of times $x$ fires during this procedure.  By Proposition~\ref{le:SMP}, if $T$ is a.s. finite then the unexplored stack instructions $\rho^F$ are independent of the explored ones, and $\rho^F$ has the same distribution as $\rho$.

\medskip

\subsection{Notation}
In light of the fact that the stabilization of $\eta$ with sleep rate $\l$ and the corresponding odometer do not depend on the choice of firing sequence, we denote them  by $\cS^\l  (n \d_0 ) $ and $u_n^\l $ respectively. Then for any legal stabilising sequence $\alpha$ for $\eta $ we have $\cS^\l  (n\d_0 ) = \tau_\alpha (n\d_0 )$, and $u_n^\l  = u_\alpha$, where the equalities hold pointwise. Whenever the sleep rate $\lambda$ is fixed we omit it from the notation for brevity. 

For $r>0$ let $B_r (x) = [x-r , x+r ]$ denote the ball of radius $r$ in $\bbZ$ centred at $x$, and write $B_r $ in place of $B_r (0)$ for brevity. 

For a subset $I \subseteq \bbZ$ let $\one_I$ denote the ARW configuration consisting of exactly one active particle at each site of $I$, and no particle outside~$I$. 
If  $I = [a,b] \subseteq \bbZ$ is a finite interval, let 
	\[ \partial I := \{ a-1, b+1 \} \]
denote its (outer) boundary, and 
	\[ \bar I := I \cup \partial I  \]
denote its closure. The cardinality of a finite set $I \subseteq \bbZ$ will be denoted by $ \# I$. 
If $a,b \in  \bbR$  and $I \subseteq \bbZ$, write $I=[a,b]$ or $I=(a,b)$ to mean $I=[a,b] \cap \bbZ$ and $I=(a,b) \cap \bbZ$ respectively. 

Let $\eta$ denote a finite ARW configuration on $\bbZ$, and recall that $|\eta |$ counts the number of particles in $\eta$, regardless of their state. For $I \subseteq \bbZ$ we write $|\eta|_I$ for the number of particles in $\eta \big|_I$, that is 
	\[ |\eta |_I := \sum_{x \in I } |\eta (x) | . \]

Recall that $\cS(\eta )$ denotes the stabilization of $\eta$. For an interval $I \subseteq \bbZ$, with $\cS_I(\eta )$ we denote the stabilization of $\eta $ with killing upon exiting $I$. That is, particles that step outside $I$ die immediately, and are removed from the system. Note that 
	\[ |\cS(\eta )| = |\eta | , \quad \mbox{ while } \quad |\cS_I (\eta )| \leq |\eta | . \]
We will sometimes need to allow particles to start from $\partial I$, in which case we let $\partial I$ act as a sink. If a particle starting on $\partial I$ steps inside $I$ on its first move, then it is killed upon returning to $\partial I$. If it steps outside $\bar I$ on its first move, it is killed immediately. 
All particles starting on $\partial I$, except for the last one, die immediately upon trying to fall asleep on their first move. The last one is allowed to fall asleep on its first move, and it never gets reactivated. This operation is still denoted $\cS_I(\eta )$. 

It will often be useful to partially stabilise a given configuration by only performing firings which are legal in the Internal DLA sense. To be precise, we say that a particle configuration  $\eta$ is IDLA-unstable at $x \in \bbZ$ if $\eta (x) \geq 2$. In that case we fire $x$  by moving one particle according to the first unused instruction at the site. Note that sleep instructions will be ignored, since there is at least another particle at the site. We keep firing IDLA-unstable sites until reaching a configuration with at most one particle at any given site. This IDLA-stable configuration will be denoted by $\cS ^{IDLA}(\eta)$. Note that if all particles were active in $\eta$ then they are all still active in $\cS ^{IDLA}(\eta)$, since we never fire sites with one particle. In a similar manner one can define, for an interval $I \subset \bbZ$, the IDLA stabilization $\cS^{IDLA}_I(\eta )$ of $\eta$ with killing upon exiting $I$. 

Finally, whenever we write that something holds for $n$ large enough we mean that to hold for all $n\geq n_0$, for some $n_0$ which may depend on all other constants introduced before, such as $\z , \l , \g , C_\l , \varepsilon , \d , k_0$ etc., including the implicit $n_0$ of previous instances of the term "large enough".

\section{The inner bound}
\label{s.inner}

Let $\varepsilon  \in (0,1)$ be fixed throughout as in the statement of Theorem \ref{th:intro}.  Assume that ${\z_{in}  <1}$, since if $\z_{in}=1$ then the inner bound of Theorem \ref{th:intro} follows by standard IDLA arguments (cf.\ Lemma~\ref{le:IDLA}). 

\textbf{Overview of the proof.} To prove the inner bound for $A_n = \text{supp}(u_n)$ we will start by finding an interval $I_1$ such that, with high probability, the odometer $u_n$ is positive on $I_1$ and the stabilization $\cS^\lambda (n\d_0)$ has density not much greater than $\zeta_{in} $ on $I_1$. If the length of $I_1$ is less than $n/(\zeta_{in} +\e ) $, then a large number of particles must escape from one or both endpoints of $I_1$. We use these leaked particles to find a larger interval $I_2$ such that, again with high probability, $u_n$ is positive on $I_2$ and $\cS^\lambda (n\d_0)$ has density not much greater than $\zeta_{in}$ on $I_2$. This procedure is repeated a finite number of times (depending on $\varepsilon$) to obtain an interval $I_k$ of length exceeding $  n/(\zeta_{in} +\e )$. \\

To start with, set  
	\begin{equation}\label{I1}
	 I_1 = \left[ -\frac{n}{2} (1-\e ) , \frac n2 (1-\e ) \right] .
	 \end{equation}
The first lemma below tells us that the IDLA stabilization of $\eta = n \d_0$ with killing upon exiting $I_1$ fills $I_1$ with high probability. 
\begin{Lemma}\label{le:IDLA}
For arbitrary $\e \in (0,1)$ define $I_1$ as in \eqref{I1}. Then for any $\d \in (0,1/2)$ and $n$ large enough it holds 
	\[ \bbP \left(\cS^{IDLA}_{I_1} ( n \d_0 )= \one_{I_1} \right) \geq 1-e^{- n^\delta }. \]
\end{Lemma}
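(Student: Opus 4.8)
The statement is essentially the one-dimensional inner bound of the Lawler–Bramson–Griffeath IDLA shape theorem, so the plan is to exploit the abelian property to reduce it to a random-walk estimate. First I would recall that $\cS^{IDLA}_{I_1}(n\delta_0)$ is obtained by repeatedly firing IDLA-unstable sites (those with $\geq 2$ particles) with killing on $\partial I_1$; by the abelian property the outcome does not depend on the firing order. I would then run IDLA in the classical ``one particle at a time'' fashion: send out walkers from $0$ one by one, each performing a symmetric random walk (using the step instructions in the stacks, ignoring sleep instructions since there are $\geq 2$ particles) until it either reaches a previously unoccupied site in $I_1$, which it then occupies, or exits $I_1$ and dies. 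The event $\{\cS^{IDLA}_{I_1}(n\delta_0) = \one_{I_1}\}$ is exactly the event that the first $\#I_1 \approx (1-\e)n$ surviving walkers occupy all of $I_1$, before the supply of $n$ walkers runs out, i.e. that fewer than $\e n$ walkers are killed at the boundary before $I_1$ is filled.

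The key step is a large-deviation bound showing that a centered interval of radius $r = \tfrac n2(1-\e)$ gets filled using at most $n$ walkers except with probability $e^{-n^\delta}$. The standard argument: fix a target site $x \in I_1$, say at distance $d=|x|$ from the origin; consider only the walkers released while $x$ is still unoccupied. Each such walker, before being absorbed anywhere, has a probability $\geq c/r$ (uniformly in the current occupied cluster, by a gambler's-ruin/harmonic-measure estimate for SRW killed on $\partial I_1$, using that the occupied set is always an interval containing $0$ in the 1D case) of being the walker that fills $x$ — more precisely one compares with the walk killed on $\{x\}\cup\partial I_1$. Summing over the (at most $n$) released walkers, the number $N_x$ of walkers ``wasted'' before $x$ is filled is stochastically dominated by a geometric-type variable with mean $O(r)$, so $\bbP(N_x > n^{1/2+\delta'}) \leq e^{-cn^{\delta''}}$; a union bound over the $\leq n$ sites $x \in I_1$ gives that the total waste is $\leq \e n$ (since $n^{1/2+\delta'}\cdot n = o(\e n \cdot \text{something})$ — in fact one wants the total number of non-filling steps to be $<\e n$, which follows because each of the $\approx(1-\e)n$ fill events is preceded by $O(\sqrt n \,\text{polylog})$ wasted walkers whp). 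I would phrase this cleanly by instead bounding, for each site $x$, the probability that $x$ is never filled by the $n$ walkers, and noting that in 1D the occupied cluster is an interval so ``$I_1$ not filled'' means some endpoint site of $I_1$ is not filled; then it suffices to bound $\bbP(\pm r \text{ not occupied})$, and here the comparison is with a single SRW on $[-r,r]$ with absorption at $\pm(r{+}1)$ and at the near endpoint, released $n$ times, each with success probability $\asymp 1/r$, giving failure probability $\leq (1-c/r)^{n} \leq e^{-cn\e} \leq e^{-n^\delta}$.

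I expect the main obstacle to be making the ``each released walker has probability $\gtrsim 1/r$ of filling the designated site $x$'' claim fully rigorous uniformly over the random evolving occupied cluster and over the (adaptively chosen) order in which walkers are released — i.e. setting up the filtration and the stopping-time/Strong-Markov apparatus (Lemma~\ref{le:SMP}) so that successive walkers are genuinely independent SRWs with fresh stacks, and controlling the harmonic-measure lower bound when the cluster already abuts $x$ on one side. In dimension one this is softer than the general LBG argument because the occupied region is always an interval, so I would lean on gambler's ruin rather than on discrete potential theory; the bookkeeping to upgrade the estimate from ``whp'' to the stretched-exponential rate $e^{-n^\delta}$ for any $\delta<1/2$ is then just a Chernoff bound for a sum/product of $n$ nearly-independent indicators. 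Since analogous estimates are collected in Appendix~\ref{appendix} for variants of IDLA, I would cite those for the walk estimates and keep the proof here to the abelian-reduction plus union bound.
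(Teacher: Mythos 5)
Your soft reductions are fine (abelian property, one-walker-at-a-time dynamics, and the observation that the occupied cluster is an interval, so failure to fill $I_1$ means one of the two endpoints $\pm\frac n2(1-\e)$ is never occupied), but the quantitative core of the proposal does not work. With $r=\frac n2(1-\e)$ the chain you write, $(1-c/r)^n\le e^{-cn\e}\le e^{-n^\d}$, is numerically false: since $r\asymp n$, the quantity $(1-c/r)^n$ converges to a positive constant, so it is not even $o(1)$, let alone stretched-exponentially small. The underlying heuristic is also unjustified: a walker can \emph{settle} at the endpoint $r$ only when the cluster's right edge is already at $r-1$, so ``each released walker fills the designated site with probability $\gtrsim 1/r$'' is false for a fixed distant site, and ``reaches $r$ before exiting $I_1$'' (which indeed has probability $\approx 1/2$, not $1/r$) is not the same as settling there. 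The earlier accounting has the same problem: a per-site waste $N_x$ dominated by a geometric of mean $O(r)=O(n)$ is useless, since you cannot afford waste of order $n$ for even one site, and for such a variable $\bbP(N_x>n^{1/2+\d'})$ is close to $1$, not $e^{-cn^{\d''}}$.

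What is actually needed is a fluctuation estimate rather than a per-walker hitting estimate. In one dimension no walker is killed until one side of the cluster reaches $\partial I_1$; after that moment each walker fills a site on the unfinished side with probability $\ge 1/2$ by gambler's ruin, so the only thing to control is the size of the unfilled gap when one side first touches the boundary. The paper does this by coupling the killed process with unkilled IDLA up to that hitting time and applying Azuma--Hoeffding to the martingale $M_k=(k+1)b_k-\tfrac{k(k+1)}2$, giving fluctuations $n^{1/2+\e}$ and hence a leftover gap of size $O(n^{1/2+2\e})$, which the remaining $\ge n\d$ walkers fill except with probability bounded via a Binomial$(n\d,1/2)$ Chernoff estimate (Appendix A, Subsection \ref{sec:appkilling}); this is also where the restriction $\d<1/2$ in the exponent comes from. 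Alternatively, your endpoint reduction could be made rigorous via the Lawler--Bramson--Griffeath decomposition $N_z\ge M_z-L_z$ at $z=\pm r$, exactly as the paper does for Theorem \ref{th:IDLABernoulli}: one has $\bfE (M_r)\approx n/2$, while $\bfE (L_r)\le\sum_{y\in I_1}\bbP_y(\tau_r<\tau_{\partial I_1})\approx\frac n2(1-\e)$, and concentration of both sums of (nearly) independent indicators yields a bound of order $e^{-c_\e n}$. Either route requires precisely the ingredient (martingale fluctuations, or the $M$ versus $L$ bookkeeping) that your proposal flags as ``the main obstacle'' but does not supply.
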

This follows from simple martingale arguments; we postpone the proof to Appendix \ref{appendix}. \\

%In the next result we isolate a  crucial observation, which is at the core of our proof of the inner bound. Let $I=[a+1 , b-1]$ be an interval containing the origin, so that $\partial I =\{ a,b\}$. Then the next result states that, on the event that IDLA starting from $n\d_0$ and killed upon exiting the interval $I$ fills $I$, the restriction of the $ARW^\l$  stabilization of $n\d_0$ to $I$ only depends on the stacks of instructions outside $I$ via the values of the odometer function on $\partial I$. 
The next result describes the restriction of the ARW stabilization of $n \d_0$ to an interval~$I$ on the event that IDLA fills $I$, using the crucial observation that this restriction depends on the stacks outside $I$ only via the odometer values on $\partial I$. 
\begin{Lemma}[Coupling Between ARW On $\bbZ$ And ARW Killed On Exiting An Interval] \label{le:crucial} 
Fix $n \in \bbN$ and $a,b \in \bbZ$ with $a<0<b$. Let $\cS (n \d_0 ) $ and $u_n $ denote the stabilization of $n \d_0$ and the odometer function respectively, using i.i.d.\ stacks $ \rho = (\rho_x (k))_{x \in \bbZ , k \geq 1}$. 
For an interval $I = [a+1,b-1] \cap \mathbb{Z}$, denote by $F_I$ the event that IDLA, starting from $n \d_0$ and killed upon exiting $I$, using the same stacks $\rho$ fills $I$.
For nonnegative integers $l$ and $r$ let $ l \d_a + \one_I + r \d_b $ denote the configuration consisting of one active particle at each site of $I$, together with $l$ active particles at $a$ and $r$ active particles at $b$ (see Figure \ref{fig:crucial} below). Denote by $\tilde \cS_I ( l \d_a + \one_I + r \d_b ) $ its stabilization with killing on $\partial I$, using i.i.d.\ stacks $\tilde \rho = (\tilde \rho_x (k))_{x \in \bbZ , k \geq 1}$. 
Then there exists a coupling of $\rho$ and $\tilde \rho$ such that for all $l$ and $r$, on the event $F_I \cap \{ u_n(a) = l , u_n (b) = r\} $ it holds 
	\[ \cS (n \d_0 ) \big|_I = \tilde \cS_I (  l \d_a + \one_I + r \d_b ) \big|_I . \]
\end{Lemma}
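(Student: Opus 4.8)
The plan is to build the coupling of the two stack families $\rho$ and $\tilde\rho$ site by site, so that on the relevant event every firing performed by the ARW-on-$\bbZ$ dynamics inside $I$ matches a firing performed by the killed dynamics $\tilde\cS_I$, and conversely. The key structural observation is this: condition on $F_I\cap\{u_n(a)=l,\ u_n(b)=r\}$. Under $F_I$, IDLA started from $n\d_0$ fills $I$; in particular every site of $\bar I = I\cup\{a,b\}$ is visited, and the particles that first fill $\bar I$ do so using only the dynamics \emph{restricted to the instructions used so far}. After IDLA has filled $\bar I$, run the remaining ARW firings. Using the Abelian property (Proposition~\ref{pr:facts}(1)) we may choose a firing order that first performs all IDLA-fill firings, and then stabilizes in a convenient order; the final odometer $u_n$ and final configuration $\cS(n\d_0)$ are unchanged. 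So WLOG the firing sequence begins by filling $\bar I$, then proceeds.

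Next I would describe the legal-firing sequence for $\cS(n\d_0)$ that never leaves $\bar I$ until $I$ is fully stabilized. Concretely: after $\bar I$ is filled, keep firing only sites inside $I$ that are unstable, and fire $a$ and $b$ only when forced; particles that step from $a$ to $a-1$ or from $b$ to $b+1$ leave $I$. Since $u_n(a)=l$ means $a$ is fired exactly $l$ times total and $u_n(b)=r$ means $b$ is fired exactly $r$ times total, I want to say: the portion of the odometer of $\cS(n\d_0)$ restricted to $I$ is produced by exactly the same firing data as a run of $\tilde\cS_I$ started from $l\d_a+\one_I+r\d_b$, \emph{provided} we feed $\tilde\rho$ the right instructions. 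This is where the coupling is defined: for $x\in I$, couple $\tilde\rho_x$ with the tail of $\rho_x$ that remains after $\bar I$ has been filled; for $x\notin I$, let $\tilde\rho_x$ be independent fresh stacks. The point of putting $l$ particles at $a$ and $r$ at $b$ in the killed system is precisely to mimic the $l$ (resp. $r$) firings that $\cS(n\d_0)$ performs at $a$ (resp. $b$): each such firing either sends a particle into $I$ (matched in $\tilde\cS_I$ by a firing of one of the $l$ stacked particles at $a$ using the matching instruction) or sends it out to $a-1$ (matched by a particle dying on $\partial I$ in $\tilde\cS_I$) — and since $a-1,b+1$ act as sinks on both sides, the instruction actually consumed at $a,b$ beyond the first "step into $I$" is irrelevant on both sides. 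I would make this precise by an induction on the number of firings, maintaining the invariant that the two configurations agree on $I$ and that the cumulative odometers on $I$ agree, using the Strong Markov Property (Lemma~\ref{le:SMP}) to justify that the tail stacks $\rho^{F_{\mathrm{IDLA}}}$ are i.i.d.\ with the right law and hence can be identified in distribution with $\tilde\rho|_I$.

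Then I would run the killed dynamics $\tilde\cS_I(l\d_a+\one_I+r\d_b)$ in the matching order: first fire the boundary vertices $a,b$ to inject their particles, then stabilize $I$ using instructions coupled to $\rho$'s tail. By the Abelian property for $\tilde\cS_I$ (which holds for acceptable sequences, including ones that fire boundary vertices), the final killed configuration does not depend on this order, so the identity $\cS(n\d_0)|_I=\tilde\cS_I(l\d_a+\one_I+r\d_b)|_I$ holds for the canonical stabilizations. One subtlety to handle carefully: in $\cS(n\d_0)$ the sites $a$ and $b$ themselves are \emph{inside} the region being stabilized (they are ordinary sites of $\bbZ$), so their final states contribute to $\cS(n\d_0)|_{\bar I}$ but we only claim equality on $I$, not on $\bar I$ — and indeed in the killed system $a,b\in\partial I$ become a sink, so there is no claim there. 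Restricting the asserted identity to $I$ (the open interval) is exactly what sidesteps this; I should double check that the statement indeed only asserts equality on $I=(a+1,b-1)$, which it does.

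The main obstacle I expect is the bookkeeping that makes "$u_n(a)=l$ and $u_n(b)=r$" line up with "inject exactly $l$ particles at $a$ and $r$ at $b$" — one has to argue that conditioning on $\{u_n(a)=l,u_n(b)=r\}$ is compatible with revealing the stacks, i.e. that this event, together with $F_I$, is measurable with respect to the instructions used strictly before any further firing outside $\bar I$ or any "extra" firing at $a,b$ is needed, so that the Strong Markov Property applies with a genuine stopping rule. In other words, the delicate point is choosing the stopping time $T$ (stop when $I$ is stabilized and the $a,b$-odometers have reached their final values $l,r$) and checking $\{F=f\}\in\cF_f$, after which Lemma~\ref{le:SMP} does the rest. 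Everything else is a matched-firing induction that is conceptually routine once the order of firings is fixed by the Abelian property.
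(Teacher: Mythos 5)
Your overall skeleton is the right one (and is the paper's): run an IDLA fill phase for $n\d_0$ with particles stopping on $\partial I=\{a,b\}$, shift the stacks inside $I$ by the fill-phase odometer $u_I$, invoke the Strong Markov Property to see the shifted stacks are again i.i.d., and then match firings of the two systems. But your definition of the coupling outside $I$ breaks the argument. You take $\tilde\rho_x$ to be \emph{independent fresh stacks} for $x\notin I$, and you claim that ``the instruction actually consumed at $a,b$ \dots is irrelevant on both sides.'' It is not irrelevant: the instruction used at each of the $l$ firings of $a$ (resp.\ $r$ firings of $b$) decides whether a particle is injected into $I$ at that firing or not, and the whole point of the pathwise identity $\cS(n\d_0)|_I=\tilde\cS_I(l\d_a+\one_I+r\d_b)|_I$ is that the two systems inject particles into $I$ at exactly the same firings. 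With fresh independent stacks at $a$ and $b$, with positive probability the number of ``step right'' instructions among the first $l$ instructions of $\tilde\rho_a$ differs from that among the first $l$ of $\rho_a$, so the killed system receives a different number of particles from the boundary and the almost-sure equality on $F_I\cap\{u_n(a)=l,u_n(b)=r\}$ fails. The fix (and what the paper does) is to set $\tilde\rho_x=\rho_x$ for all $x\notin I$, in particular at $a,b$; this is legitimate precisely because in the fill phase particles \emph{stop and accumulate} at $\partial I$ without consuming any instruction there, so $u_I$ is supported in $I$ and the Strong Markov Property with shift $u_I$ still yields $\tilde\rho\overset{d}{=}\rho$. (Relatedly, be careful that your fill phase stops at $\partial I$ rather than ``filling $\bar I$'' in the IDLA sense, otherwise instructions at $a,b$ get consumed and the count $u_n(a)=l$ no longer lines up with $l$ injected particles.)

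Your closing worry about measurability of $\{u_n(a)=l,\,u_n(b)=r\}$ with respect to a stopped $\sigma$-field is both unnecessary and unworkable as stated: that event is determined only by the full stabilization, so no stopping rule reveals it while keeping the relevant stacks ``fresh''. Nothing of the sort is needed. The only stopping rule used is the fill phase (whose odometer $u_I$ clearly satisfies $\{u_I=f\}\in\cF_f$); the lemma does not claim, and its application in Lemma~\ref{le:step1} does not require, that $\tilde\rho$ be independent of the event $F_I\cap\{u_n(a)=l,u_n(b)=r\}$ — only that $\tilde\rho$ has the law of i.i.d.\ stacks and that the stated equality holds pointwise on that event. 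Once the boundary stacks are identified as above, your matched-firing induction (fire both systems at the same site when the site is in $\bar I$, only the standard system otherwise; particles crossing $\partial I$ in the standard system correspond to killed particles, and each firing of $a$ or $b$ consumes one of the $l$ resp.\ $r$ boundary particles of the killed system) goes through and gives the claim on $I$.
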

\begin{figure}[!h] 
  \centering
    \includegraphics[width=\textwidth]{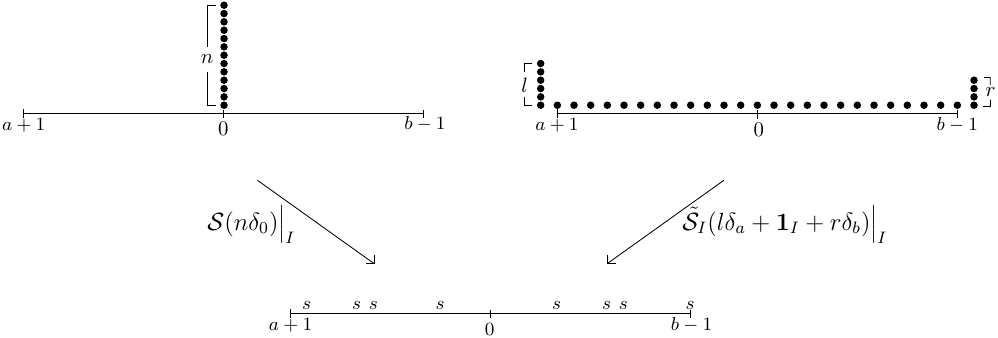}
    \caption{An illustration of the equality $ \cS (n\d_0) \big|_I = \tilde \cS_I ( l\d_{a} + \one_I + r\d_{b} ) \big|_I  $.}\label{fig:crucial}
\end{figure}
\begin{proof}
The result follows from the Abelian Property (cf.\ Proposition \ref{pr:facts}) together with the Strong Markov Property (cf.\ Proposition \ref{le:SMP}). 
We build the stacks $\tilde \rho$ from $\rho$ as follows. Starting with $n$ active particles at the origin and i.i.d.\ stacks $\rho $, move the particles according to IDLA dynamics until they either reach an empty site or reach $\partial I = \{a,b\}$, where they stop. Note that if a moving particle encounters a sleep instruction in this phase it discards it and looks at the next instruction, since if it is moving it cannot be alone at a site. As soon as a particle finds an empty site in $I$ it stops there, while particles that reach $\partial I = \{ a,b\}$ accumulate at $a$ and $b$. Assume that this procedure fills $I$, i.e.\ the event $F_I$ holds, and denote by $u_I$ the associated odometer function, so that $u_I(x)$ counts the number of used instructions at $x$ if $x \in I$, and $u_I(x) = 0$ otherwise. Then set 
	\[ \tilde \rho_x (k) = \begin{cases} 
	\rho_x (k+ u_I(x))  & \mbox{if } x \in I , \\
	\rho_x(k) & \mbox{otherwise.}
	\end{cases} \]
By the Strong Markov Property of ARWs, the shifted stacks $\tilde \rho$ have the same distribution as $\rho$.
It remains to show that, under this coupling, stabilising the two systems will result in the same configuration inside $I$. Recall that in the $\rho$ system the particles move according to standard ARW dynamics on $\bbZ$, while in the $\tilde \rho$ system they are killed on $\partial I$. We therefore refer to these systems as the standard system and killed system respectively. To argue that the coupled stacks give the same stable configuration in $I$ observe that we can evolve the standard and killed dynamics together by firing both systems at the same site $x$ if $x \in \bar I $, and only the standard system if $x \notin \bar I$. In terms of particle trajectories, this is equivalent to saying that each time a particle crosses $\partial I$ in the standard system, a particle gets killed and a new one starts from the same site in the killed system. 
This results in indistinguishable dynamics, and therefore in the same stable configuration, in $I$. 
\end{proof}
In the proof of the inner bound we will combine the above coupling with the following observation.
\begin{Lemma} \label{le:equality}
Fix an arbitrary interval $I = [a+1 , b-1]$ and integers $l,r \geq 0$. Then 
	\[ \cS_{I} ( l\d_{a} + \one_{I} + r\d_{b} ) \big|_{ I} \stackrel{(d)}{=}  \cS_{I} ( \one_{I} )\big|_{I} . \]
\end{Lemma}
\begin{proof}
Note that to build $\cS_{I} (l\d_{a} + \one_{I} + r\d_{b} )\big|_{ I}$ we can first move the particles starting on $\partial I = \{ a,b\}$ (if any). If they fall inside $I$ they will walk until returning to $\partial I$, at which point they die. If they fall outside $I$ they can never enter $I$ again, as they are killed on $\partial I$. Once we have let all the particles starting on $\partial I$ either die or settle (i.e.\  fall asleep at an empty site) we are left with having to stabilize $\one_{I}$ inside $I$. Hence the above equality in distribution follows by the Strong Markov Property of ARWs (cf.\ Proposition \ref{le:SMP}). 
\end{proof}

We take $I = I_1 $ as in \eqref{I1}, and build $\cS (n\d_0)\big|_{I_1}$ as follows. First move the particles according to IDLA in $I_1$ killed at $\partial I_1$, which will result in the configuration $\one_{I_1}$ with 
high probability by Lemma \ref{le:IDLA}. Then, 
on the event $\{ u_n (a) = l , u_n(b) = r\}$, we stabilize $l \d_a + \one_{I_1} + r \d_b $ with killing outside $I_1$. To implement this program we need an a priori bound on the ARW odometer function.

\begin{Lemma}[A Priori Upper Bound On The Total Odometer] \label{le:apriori}
	\[ \bbP \left( \sum_{x\in \bbZ} u_n(x) > n^4 \right) \leq n^{-2} \]
for $n$ large enough. 
\end{Lemma}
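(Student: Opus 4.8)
The plan is to confine every firing to a box of linear size and then control the number of firings of such a confined process by a one‑site‑at‑a‑time martingale on the second moment of the configuration. By the Abelian property (Proposition~\ref{pr:facts}) the total odometer $\sum_x u_n(x)$ does not depend on the firing order, so I am free to fire one unstable site at a time in any adapted order.

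\emph{Confinement.} Equation \eqref{eq:linearouterbound} provides a constant $C=C(\lambda)<\infty$ with $A_n\subseteq B_{Cn}$ eventually, almost surely; what I need is the quantitative statement $\bbP (A_n\not\subseteq B_{Cn})\le\tfrac12 n^{-2}$ for $n$ large. I would obtain this by inspecting the two‑sided trap construction behind \eqref{eq:linearouterbound}: with traps placed near $\pm Cn$, the probability that they fail to absorb the $n$ particles decays polynomially (indeed, I expect, stretched‑exponentially) in $n$ once $C$ is taken large enough. Note that $\{A_n\not\subseteq B_{Cn}\}$ coincides with the event $\{\sigma<\infty\}$, where $\sigma$ is the first firing after which some particle lies outside $B_{Cn}$: such a particle makes its site unstable, and that site must eventually fire. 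On $\{\sigma=\infty\}$ the stabilization never touches $\partial B_{Cn}$, so $\sum_x u_n(x)$ equals the total number of firings $T$.

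\emph{The martingale.} Write $\eta_t$ for the configuration after $t$ firings and $\Phi_t:=\sum_x x^2|\eta_t(x)|$. By the i.i.d.\ structure of the stacks (Lemma~\ref{le:SMP}), the instruction used at the $(t+1)$st firing is a fresh sample of \eqref{instructions}, independent of $\mathcal F_t$; a sleep instruction leaves $\Phi$ unchanged (a sleeping particle still contributes $1$ to $|\eta|$), while a step out of the fired site $x_t$ changes $\Phi$ by $(x_t\pm1)^2-x_t^2=\pm 2x_t+1$. Hence $\bbE [\Phi_{t+1}-\Phi_t\mid\mathcal F_t]=\tfrac1{1+\lambda}$, so $Y_t:=\Phi_t-\tfrac{t}{1+\lambda}$ is a martingale with $Y_0=0$ (for $\lambda=\infty$, i.e.\ Internal DLA, replace $\tfrac1{1+\lambda}$ by $1$). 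Stop at $S:=T\wedge\sigma\wedge k$. For $t\le T\wedge\sigma$ all particles lie in $B_{Cn}$, so $0\le\Phi_t=O(n^3)$ and $|Y_t-Y_{t-1}|\le 2Cn+2$. Optional stopping gives $\tfrac1{1+\lambda}\bbE [S]=\bbE [\Phi_S]=O(n^3)$; orthogonality of martingale increments gives $\bbE [Y_S^2]\le(2Cn+2)^2\,\bbE [S]$; and from $S\le(1+\lambda)(\Phi_S+|Y_S|)$ one deduces $\bbE [S^2]=O(n^6)$, and iterating once more (or via Burkholder--Davis--Gundy) $\bbE [S^3]=O(n^9)$, all uniformly in $k$. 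Since $T<\infty$ a.s.\ (finitely many particles, $\lambda>0$), letting $k\to\infty$ yields $\bbE [(T\wedge\sigma)^3]=O(n^9)$, so $\bbP (T\wedge\sigma>n^4)=O(n^{-3})\le\tfrac12 n^{-2}$ for $n$ large. Since $\{\sum_x u_n(x)>n^4\}\subseteq\{\sigma<\infty\}\cup\{T\wedge\sigma>n^4\}$, combining this with the confinement bound finishes the proof.

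\emph{Main obstacle.} The martingale $Y_t$ and the moment estimates are routine; the genuine work is the confinement step, i.e.\ upgrading the a.s.\ statement \eqref{eq:linearouterbound} to the polynomial tail $\bbP (A_n\not\subseteq B_{Cn})\le\tfrac12 n^{-2}$. This requires extracting a quantitative failure probability from the two‑sided trap construction (or supplying a self‑contained one), and I expect that to be the only place needing real care.
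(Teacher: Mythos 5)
Your martingale step is essentially sound: firing one unstable site at a time in an $\mathcal F_t$-adapted order, the revealed instruction is indeed a fresh sample of \eqref{instructions} (this is exactly what Lemma~\ref{le:SMP} provides), the computation $\bbE[\Phi_{t+1}-\Phi_t\mid\mathcal F_t]=\tfrac1{1+\lambda}$ is correct because used sleep instructions do not change $\sum_x x^2|\eta_t(x)|$, and the optional-stopping/orthogonality/BDG chain does deliver $\bbE[(T\wedge\sigma)^3]=O(n^9)$ uniformly in the truncation, hence $\bbP(T\wedge\sigma>n^4)=O(n^{-3})$. The identification $\{\sigma<\infty\}\subseteq\{A_n\not\subseteq B_{Cn}\}$ is also fine, since an active particle outside $B_{Cn}$ sits at a site that must fire in any stabilizing sequence.

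The genuine gap is the confinement input $\bbP(A_n\not\subseteq B_{Cn})\le\tfrac12 n^{-2}$, which you assert but do not prove. The paper states \eqref{eq:linearouterbound} only in the almost-sure ``eventually'' form, in a discussion section and without proof, so it cannot be cited for a rate; and extracting a quantitative failure probability from a two-sided trap construction is not a routine inspection — one has to design a legal (or acceptable) toppling procedure in $[-Cn,Cn]$, handle the fact that sleeping particles get re-awakened by later particles, and control the number of trap failures, which is work of essentially the same nature and length as the lemma you are trying to prove. Indeed, the paper's own proof does exactly this kind of construction once and gets both conclusions simultaneously: by the least action principle it suffices to bound the odometer of \emph{some} acceptable stabilization, and the paper stabilizes by first spreading the $n$ particles onto a $\Delta$-spaced grid with $\Delta=\Theta_\lambda(\log n)$ (IDLA-type walks, odometer bounded via the exit-time estimate $\bbP(\tilde T_k\ge(k\Delta\log n)^2)\le n^{-5/2}$) and then letting them fall asleep before they can interact (each moves at most $\Delta/2$ except with probability $n^{-5}$), so no separate a priori containment is ever needed. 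So your Lyapunov-function route is a legitimately different idea, but as written it pushes all of the difficulty into the unproved confinement estimate; either supply that trap argument in full, or note that once you have built such a procedure you can bound its total odometer directly, as the paper does, making the martingale superfluous.
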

\begin{proof}
By the least action principle it suffices to exhibit a stabilization procedure with total odometer bounded by $n^4$ with high probability. To this end, note that forcing particles to wake up can only increase the total odometer. We therefore proceed as follows. Let $G$ denote a geometric random variable with parameter $\l /(1+\l )$, and choose $C_\l $ so that 
	\[ \bbP (G > 2 C_\l \log n ) \leq n^{-7/2} \]
for $n$ large enough. Set $\D = 2 \lceil C_\l \log n \rceil $. We release $n$ particles from the origin one at the time. Each particle performs a simple random walk on $\bbZ$, until reaching an unoccupied site of the $\D$-grid $\{ k \D : k\in \bbZ\}$ where it stops, thereby changing the site into occupied. Note that this is equivalent to running IDLA on $\bbZ$ with particles only allowed to stop on the $\D$-grid. At the end of this procedure all particles are at distance $\D$ one another. We release them, evolving the particle system according to ARW dynamics. By the choice of $\D$ all the particles will fall asleep before they are able to interact, which entails stabilization. 
\begin{figure}[!h] 
  \centering
    \includegraphics[width=\textwidth]{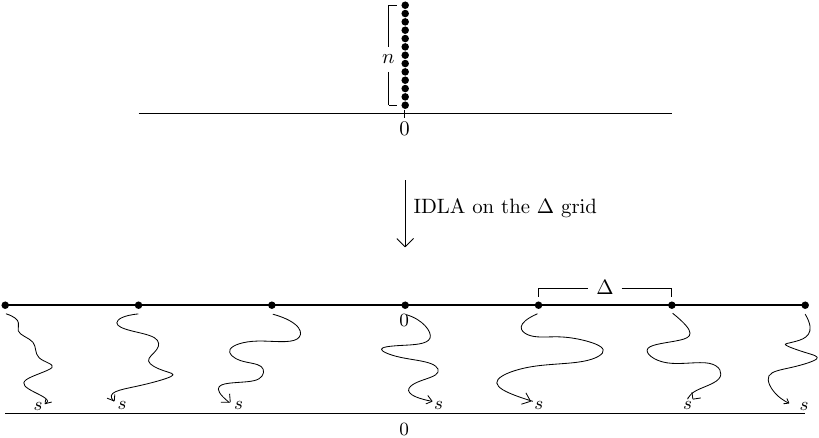}
    \caption{An illustration of the proof. The particles are spread enough so that their trajectories do not intersect before they fall asleep.}
\end{figure}

This procedure takes total odometer 
	\[ \sum_{k=1}^n (T_k + G_k ) , \]
where $T_k$ and $G_k$ denote the time it takes for the $k^{th}$ walk to occupy a site of the $\D$-grid, and to fall asleep when released, respectively. Note that 
	\[ \bbP \left( \max_{1\leq k \leq n} G_k \geq \D  \right) \leq n^{-5/2} \]
for $n$ large enough, by the choice of $\D$. Moreover, if $\tilde T_k$ denotes the time it takes for the $k^{th}$ random walk to reach distance $k\D$ from the origin, we have that 
	\[  \sum_{k=1}^n T_k \leq  \sum_{k=1}^n \tilde T_k , \]
with the advantage that the $\tilde T_k$'s are independent random variables. We use that by \cite{lawler1992internal}, Proposition 2.4.5, 
	\[ \bbP \left( \tilde{T}_k \geq (k\D \log n )^2 \right) \leq n^{-5/2} \]
for all $k\leq n$, and $n $ large enough. Thus 
	\[ \begin{split} 
	\bbP \left( \sum_{k=1}^n (T_k + G_k ) > n^4 \right) & \leq 
	\bbP \left( \sum_{k=1}^n \tilde{T}_k + n \D > n^4 \right) + n^{-5/2} 
	\\ & \leq \one \left( \sum_{k=1}^n (k\D \log n )^2 + n\D > n^4 \right) + 2n^{-5/2} 
	\leq  n^{-2} 
	\end{split}\]
for $n$ large enough, which concludes the proof. 
\end{proof}
Let $B$ (as in \emph{bound}) denote the event of the above lemma, and $F$ (as in \emph{fill}) denote the event of Lemma \ref{le:IDLA}.  We are now in position to show that $\cS (n\d_0)$ has low density on~$I_1$ with high probability. 
\subsection*{Step 1}
Recall that for a stable ARW configuration $\eta $ on $\bbZ$ 
	\[ |\eta | := | \{ x : \eta (x) >0 \}| \]
counts the number of sleepers in $\eta$. 
Suppose that $\d \in (0,1)$ is fixed small enough so that $\z_{in} + \d <1$. We will choose $\d$ later depending on $\z_{in}$ and $\e$.  
Define the good event 
	\begin{equation}\label{S1}
	 E_1 := B \cap F \cap \{  |\cS (n\d_0 ) |_{I_1}  \leq (\z_{in} +\d ) \# I_1  \} . 
	 \end{equation}
\begin{Lemma}\label{le:step1}
For $n$ large enough 
	\[ \bbP (E_1^c ) \leq n^{-3/2} . \]
\end{Lemma}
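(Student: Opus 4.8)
The plan is to bound $E_1^c$ by a union of three events: $E_1^c \subseteq B^c \cup F^c \cup \big(B\cap F\cap\{|\cS(n\d_0)|_{I_1} > (\z_{in}+\d)\#I_1\}\big)$. The first two are already under control: $\bbP(B^c)\le n^{-2}$ by Lemma~\ref{le:apriori}, and $\bbP(F^c)\le e^{-n^{1/3}}$ by Lemma~\ref{le:IDLA}; both are $o(n^{-3/2})$. So the whole task is to bound the probability of the third event by, say, $\tfrac12 n^{-3/2}$.

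For that, let $a,b$ denote the two sites of $\partial I_1$. On the event $B$ the total odometer is $\le n^4$, so in particular $u_n(a)\le n^4$ and $u_n(b)\le n^4$; since $F=F_{I_1}$ we may split
\[
 B\cap F\cap\{|\cS(n\d_0)|_{I_1} > (\z_{in}+\d)\#I_1\}
 \;\subseteq\; \bigcup_{0\le l,\,r\le n^4} F\cap\{u_n(a)=l,\ u_n(b)=r\}\cap\{|\cS(n\d_0)|_{I_1} > (\z_{in}+\d)\#I_1\}.
\]
By Lemma~\ref{le:crucial}, on $F\cap\{u_n(a)=l,\ u_n(b)=r\}$ we have $\cS(n\d_0)\big|_{I_1} = \tilde\cS_{I_1}(l\d_a+\one_{I_1}+r\d_b)\big|_{I_1}$ with stacks $\tilde\rho$ distributed as $\rho$; hence each term in the union has probability at most $\bbP\big(|\cS_{I_1}(l\d_a+\one_{I_1}+r\d_b)|_{I_1} > (\z_{in}+\d)\#I_1\big)$, the probability now being with respect to fresh i.i.d.\ stacks.

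The remaining input is that feeding extra active particles in from $\partial I_1$ cannot increase (in law) the number of sleepers inside $I_1$: I claim $|\cS_{I_1}(l\d_a+\one_{I_1}+r\d_b)|_{I_1}$ is equidistributed with $|\cS_{I_1}(\one_{I_1})|_{I_1}$ (stochastic domination would already suffice). To see this, stabilize $\one_{I_1}$ first — producing a sample of $\pi_{I_1}$ — and then process the $l+r$ particles waiting at $a,b$ one at a time, re-stabilizing after each; by the Strong Markov Property (Lemma~\ref{le:SMP}) each re-stabilization uses fresh i.i.d.\ stacks, and each step is either a loss of a particle through the sink (no change) or an instance of ``add one active particle at an endpoint of $I_1$ and stabilize'', which fixes $\pi_{I_1}$ by \cite{levineliang}. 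Granting this, each term is at most $\bbP\big(|\cS_{I_1}(\one_{I_1})|_{I_1} > (\z_{in}+\d)\#I_1\big)$. Since $\#I_1 \sim (1-\e)n\to\infty$ and $\z_{in}=\limsup_I \z_{in,I}$, for $n$ large we have $\z_{in,I_1} < \z_{in}+\d$, so Definition~\ref{def:zin} gives $\bbP\big(|\cS_{I_1}(\one_{I_1})|_{I_1} > (\z_{in}+\d)\#I_1\big)\le (\#I_1)^{-20}$. Summing over the at most $(n^4+1)^2$ pairs $(l,r)$ gives a bound $(n^4+1)^2(\#I_1)^{-20}=O(n^{-12})\le \tfrac12 n^{-3/2}$ for $n$ large, and combining with the first two terms yields $\bbP(E_1^c)\le n^{-3/2}$.

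I expect the genuinely delicate point to be this equidistribution step: one must be careful about the boundary-sink convention (a particle at $a$ enters $I_1$ only with positive probability, otherwise dies immediately), about the a.s.\ finiteness of the finite-volume stabilizations involved (needed so the Strong Markov Property applies at each of the $l+r$ steps), and about the precise form of the stationarity statement for $\pi_I$ — in particular that it covers additions at the endpoints of $I$, which is where a boundary particle enters. If one prefers not to invoke \cite{levineliang}, the same inequality can instead be obtained from a direct monotone coupling of the two odometers, at the cost of a short but somewhat fiddly argument.
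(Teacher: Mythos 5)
Your proof is correct, and its skeleton (union bound over $B^c$, $F^c$, and the bad density event; union bound over the $\le (n^4+1)^2$ odometer values at $\partial I_1$; Lemma~\ref{le:crucial} to pass to the killed system with fresh stacks; then Definition~\ref{def:zin} and summation) is exactly the paper's. The one place you diverge is the reduction of $|\cS_{I_1}(l\d_a+\one_{I_1}+r\d_b)|_{I_1}$ to $|\cS_{I_1}(\one_{I_1})|$: you stabilize $\one_{I_1}$ first and then feed in the $l+r$ boundary particles one at a time, invoking the stationarity of $\pi_{I_1}$ under ``add one particle at $v\in I$ and stabilize'' from \cite{levineliang} together with the Strong Markov Property at each of the $l+r$ re-stabilizations. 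The paper gets the same equidistribution more cheaply by processing the boundary particles \emph{first}: at that stage every site of $I_1$ carries one \emph{active} particle, so an entering boundary particle can never settle (sleep instructions are void at doubly occupied sites) and simply walks until it is killed at $\partial I_1$, leaving the configuration $\one_{I_1}$ untouched; a single application of Lemma~\ref{le:SMP} then says the subsequent stabilization of $\one_{I_1}$ uses fresh i.i.d.\ stacks, hence has the law of $\cS_{I_1}(\one_{I_1})$. This ordering avoids the external input from \cite{levineliang}, and also the Abelian reordering you implicitly need to move the boundary particles to the end relative to the paper's boundary-first convention. The delicate points you flag are real but harmless in either route: the boundary-sink convention only affects whether one particle sleeps \emph{at} $\partial I_1$, which does not count toward $|\cdot|_{I_1}$; finite-volume stabilizations with a sink are a.s.\ finite, so the Strong Markov Property applies; and endpoint additions are covered since the endpoints belong to $I_1$. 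So both arguments are valid; the paper's is the more elementary and self-contained, while yours buys the (equivalent) statement that the extra boundary particles leave the law $\pi_{I_1}$ exactly invariant, at the cost of invoking the stationarity theorem.
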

\begin{proof}
We have 
	\[ \bbP(E_1^c) \leq \bbP(B^c) + \bbP(F^c) + \bbP \big( B\cap F \cap 
	\{  |\cS (n\d_0 )|_{ I_1}   > (\z_{in} +\d )  \# I_1  \} \big) . \]
By Lemmas \ref{le:IDLA} and \ref{le:apriori}
	\[ \bbP(B^c) + \bbP(F^c) \leq 2n^{-2} , \]
so it remains to bound the third term above. 
Write $I_1 = [a_1 +1 , b_1 -1]$ so that $\partial I_1 = \{ a_1,b_1\}$. We union bound over all possible values of the odometer function  on $\partial I_1$ to find
	\[ \begin{split} 
	\bbP \big( B\cap F & \cap \{  |\cS (n\d_0 )  |_{ I_1}   > (\z_{in} +\d )  \# I_1  \} \big) = 
	\\ & = \sum_{l,r=0}^{n^4} \bbP \big( B \cap F \cap \{u_n( a_1) =l \}\cap \{ u_n(b_1)=r \} \cap \{  |\cS (n\d_0 )|_{ I_1}   > (\z_{in} +\d )  \# I_1 \} \big) 
	 \\ & \leq  \sum_{l,r=0}^{n^4} \bbP \big( | \cS_{I_1} (l\d_{a_1} + \one_{I_1} + r\d_{b_1} )|_{ I_1} > (\z_{in} +\d )  \# I_1  \big) 
	 \\ & = n^8  \bbP \big( |\cS_{I_1} (\one_{I_1} ) | > (\z_{in} +\d )  \# I_1  \big) .
	\end{split}\]
Here the inequality follows from Lemma \ref{le:crucial}, and the last inequality from Lemma \ref{le:equality}. 

Now by Definition~\ref{def:zin} we can take $ \# I_1$ (or equivalently $n$) large enough that $\z_{in} +\d \geq \z_{in,I_1} + \d /2$, which gives 
	\[  \bbP \big( |\cS_{I_1} (\one_{I_1} ) | > (\z_{in} +\d )  \# I_1  \big)
	\leq 
	 \bbP \big( |\cS_{I_1} (\one_{I_1} ) | > (\z_{in, I_1} +\d /2 )  \# I_1  \big)
	 \leq ( \# I_1)^{-20} ,\]
and thus 
	\[  \bbP (E_1^c ) \leq 2 n^{-2} + n^8 (  \# I_1)^{-20} \leq n^{-3/2}\]
for $n$ large enough.  
\end{proof}

\subsection*{General step} 
We will define events $E_1 \supseteq E_2 \supseteq E_3 \supseteq \cdots $ by induction. 
For $j \geq 2 $ assume  $E_{j-1}$ has been defined so that on $E_{j-1}$ there is an interval $I_{j-1} \supseteq I_1$ such that 
	\[ |\cS (n\d_0)|_{ I_{j-1}}  \leq (\z_{in} + \d )\# I_{j-1}  . \]
If 
	\[ \# I_{j-1} \geq  \frac{n}{\z_{in}+\d } (1-3\e )\] 
set $K = j-1$ and stop (here the random variable $K$ will denote the number of steps in the stabilization procedure). Otherwise $K \geq  j$ and 
	\begin{equation}\label{notdone}
	 n -  |\cS (n\d_0)|_{ I_{j-1}} \geq 2n\e . 
	 \end{equation}
%Let us sketch the construction of $I_j$ under 
On the event $E_{j-1} \cap \{ K \geq j\}$ write $I_{j-1} = (a_{j-1} , b_{j-1} )$ for integers $a_{j-1 } , b_{j-1} $ in the interval $ \in [-n/(\z_{in} +\d ) , n/(\z_{in} +\d )]$. 
Then we define $I_j \supseteq I_{j-1}$ as follows. 
For integers $r_- , l_+ \leq n^4$ on the event 
	\[E_{j-1} \cap \{ K \geq j\} \cap  \{  u_n(a_{j-1}) = r_- , u_n(b_{j-1}) = l_- \} \]
let $I_j ^\pm $ be defined by 
	\[ \cS^{IDLA}_{(-\infty , a_{j-1} )} (r_- \d_{a_{j-1}} ) = \one_{I_j^-} , \qquad 
	  \cS^{IDLA}_{(b_{j-1} , +\infty )} (l_- \d_{b_{j-1}} ) = \one_{I_j^+} , \]
	where the stabilizations use the original stacks of instructions. 
If $|I_j^+ | < n\e $ (respectively $|I_j^- | < n\e $)  then redefine $I_k^+ = \emptyset $ (respectively $I_k^- =\emptyset$) for all $k\geq j$. Note that, by \eqref{notdone}, there are at least $2n\e $ particles outside $I_{j-1}$ in $\cS (n\d_0)$, so at least one of $I_j^+ $ and $I_j^-$ must be non-empty. Then
define 
	\begin{equation} \label{Ij}
	I_j = I_j^- \cup \bar I_{j-1} \cup I_j^+ , 
	\end{equation}
and 
	\begin{equation} \label{Sj}
	 E_j = E_{j-1} \cap  \{ |\cS (n\d_0 ) |_{  I_j \setminus \bar{I}_{j-1}}  
	  \leq (\z_{in} +\d )  \# ( I_j \setminus \bar{I}_{j-1}) \} . 
	  \end{equation}
Thus on $ E_j \cap \{ j \leq K\}$ the stable configuration $\cS (n\d_0 )$ has  density 
at most $\z_{in} + \d + \mathcal{O}(j/n)$ on the interval~$I_j$ (here the term $\mathcal{O}(j/n)$ accounts for the boundary sites). 
\begin{figure}[!h] 
  \centering
    \includegraphics[width=\textwidth]{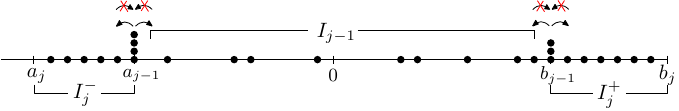}
    \caption{An illustration of the construction of $I_j$ from $I_{j-1}$.}
\end{figure}

\begin{Lemma}\label{le:genstep}
For all $j\geq 2$
	\[ \bbP( E_{j-1} \cap E_j^c\cap \{ j\leq K\} ) \leq n^{-3/2}\]
for $n$ large enough. 
\end{Lemma}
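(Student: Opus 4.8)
The plan is to mimic the proof of Lemma~\ref{le:step1}, with $I_1$ replaced by the newly attached pieces $I_j^-$ and $I_j^+$. I would work throughout on $E_{j-1}\cap\{j\le K\}$. Since $E_{j-1}\subseteq E_1\subseteq B\cap F$, the a priori bound of Lemma~\ref{le:apriori} gives $\sum_x u_n(x)\le n^4$, so $(r_-,l_-):=(u_n(a_{j-1}),u_n(b_{j-1}))$ lies in $\{0,1,\dots,n^4\}^2$; moreover on $\{j\le K\}$ we have $\#I_{j-1}<\tfrac n{\z_{in}+\d}(1-3\e)<\tfrac n{\z_{in}}$, and since $I_{j-1}\supseteq I_1\ni 0$ is an interval this forces $I_{j-1}\subseteq[-n/\z_{in},n/\z_{in}]$, leaving at most $(2n/\z_{in}+1)^2$ possibilities for the pair $(a_{j-1},b_{j-1})$. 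I would then union bound over these and over $(r_-,l_-)$; the point is that on each such event the intervals $I_j^\pm$, hence $I_j$, are completely determined, that $\#I_j^-\le r_-\le n^4$ and $\#I_j^+\le l_-\le n^4$, and that a nonempty $I_j^\pm$ satisfies $\#I_j^\pm>n\e$.

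The key step is an extension of Lemma~\ref{le:crucial} to the grown interval. Having run the stabilization of $n\d_0$ up to the moment at which $\bar I_{j-1}$ has been filled (in the IDLA sense) and the boundary odometers $u_n(a_{j-1})=r_-$, $u_n(b_{j-1})=l_-$ have been exposed, the Strong Markov Property of Lemma~\ref{le:SMP} guarantees that the unused instructions on $(-\infty,a_{j-1})$ and $(b_{j-1},\infty)$ are fresh i.i.d.\ stacks with law~\eqref{instructions}, independent of everything revealed. Rerunning the construction from the proof of Lemma~\ref{le:crucial} on the half-line $(-\infty,a_{j-1})$ — first route the mass that leaked out of $\bar I_{j-1}$ by IDLA killed at $a_{j-1}$, which \emph{by definition} fills exactly $I_j^-$, then shift the stacks past that IDLA odometer and couple the standard and killed ARW dynamics inside $I_j^-$ — shows that, on the event under consideration and for a suitable coupling,
\[ \cS(n\d_0)\big|_{I_j^-}=\tilde\cS_{I_j^-}\!\big(r_-\d_{a_{j-1}}+\one_{I_j^-}+r'\d_c\big)\big|_{I_j^-}, \qquad c:=\min I_j^--1,\ \ r':=u_n(c)\le n^4, \]
with the analogous identity on $I_j^+$. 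As in Lemma~\ref{le:step1} the particles started on $\partial I_j^-$ can then be discarded: fired first, each performs a simple random walk through the full sea $\one_{I_j^-}$ and exits at $\partial I_j^-$, where it dies (at most one of them settling on $\partial I_j^-$, hence not inside $I_j^-$), so that $\big|\tilde\cS_{I_j^-}(r_-\d_{a_{j-1}}+\one_{I_j^-}+r'\d_c)\big|_{I_j^-}\le|\cS_{I_j^-}(\one_{I_j^-})|$, and likewise for $I_j^+$. (Because the coupling depends on $(r_-,l_-)$, one genuinely must sum over these values rather than apply the identity once.)

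To assemble the estimate, note that on $E_{j-1}\cap E_j^c\cap\{j\le K\}$ the density of $\cS(n\d_0)$ on $I_j\setminus\bar I_{j-1}=I_j^-\cup I_j^+$ exceeds $\z_{in}+\d$, hence it exceeds $\z_{in}+\d$ on at least one of $I_j^-,I_j^+$ (if $\tfrac{a+b}{c+d}>t$ then $\tfrac ac>t$ or $\tfrac bd>t$ for positive $c,d$). Since $\z_{in}=\limsup_I\z_{in,I}$, there is $N$ with $\z_{in,I}\le\z_{in}+\d/2$ for all intervals $I$ with $\#I>N$; take $n$ large enough that $n\e>N$, so that every nonempty $I_j^\pm$ (having $\#I_j^\pm>n\e$) satisfies, by Definition~\ref{def:zin},
\[ \bbP\big(|\cS_{I_j^\pm}(\one_{I_j^\pm})|>(\z_{in}+\d)\#I_j^\pm\big)\le\bbP\big(|\cS_{I_j^\pm}(\one_{I_j^\pm})|>(\z_{in,I_j^\pm}+\tfrac\d2)\#I_j^\pm\big)\le(\#I_j^\pm)^{-20}\le(n\e)^{-20}. \]
Summing over the two sides and over the at most $(2n/\z_{in}+1)^2(n^4+1)^2$ choices of $(a_{j-1},b_{j-1},r_-,l_-)$ gives $\bbP\big(E_{j-1}\cap E_j^c\cap\{j\le K\}\big)\le C(\z_{in})\,n^{10}(n\e)^{-20}\le n^{-3/2}$ for $n$ large, as required.

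I expect the main obstacle to be the extension of Lemma~\ref{le:crucial} to the grown interval: unlike in Step~1, IDLA started from $n\d_0$ need not reach $I_j$ at all (it fills only $\sim n$ sites, whereas $\#I_j$ may exceed $n$), so Lemma~\ref{le:crucial} cannot be quoted off the shelf — one must restart IDLA from the leaked mass sitting at $a_{j-1}$ and $b_{j-1}$, and it is precisely here that the Strong Markov Property does the work, via a chaining of the exploration across the successive steps $I_1\subseteq\cdots\subseteq I_{j-1}$. A secondary point needing care is the verification (exactly as in Lemma~\ref{le:step1}) that the boundary particles on $\partial I_j^\pm$ may be removed without changing the sleeper count inside $I_j^\pm$, together with the bookkeeping that the coupling is $(r_-,l_-)$-dependent and so must be summed over.
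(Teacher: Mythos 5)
Your overall strategy is the paper's: union bound over the boundary data of $I_{j-1}$, extend the coupling of Lemma~\ref{le:crucial} to the newly attached pieces via the Strong Markov Property, discard the boundary particles as in Lemma~\ref{le:step1}, and invoke Definition~\ref{def:zin} on intervals of length at least $n\e$. But there is a concrete gap in the execution: after fixing $(a_{j-1},b_{j-1},r_-,l_-)$ the intervals $I_j^\pm$ are \emph{not} ``completely determined'' --- they are the random outcome of the IDLA phase run with the unexplored stacks to the left of $a_{j-1}$ and to the right of $b_{j-1}$ --- and likewise the re-entry counts $r'=u_n(c)$ at the far ends of $I_j^\pm$ are random. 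Consequently your final display, which applies Definition~\ref{def:zin} directly to the random interval $I_j^\pm$, is not justified as written: the definition provides a tail bound only for each fixed interval. You must either additionally union bound over the endpoints $a_j,b_j$ and over the values $u_n(a_j),u_n(b_j)$ (this is what the paper does; since $\# I_j^\pm\le n^4$ and the odometer is at most $n^4$ on $B$, this costs a factor of order $n^8$ per side, and the final estimate, of order $\e^{-20}n^{-2}\le n^{-3/2}$, survives), or else state explicitly a conditioning/decoupling step: conditionally on the $\sigma$-field of instructions explored through the IDLA phase that creates $I_j^\pm$, the remaining stacks are fresh i.i.d., so the fixed-interval bound can be integrated over the realization of $I_j^\pm$. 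As written, neither is done.

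A related imprecision: you claim the Strong Markov Property makes the fresh stacks ``independent of everything revealed'', including the events $\{u_n(a_{j-1})=r_-\}$ and $\{u_n(b_{j-1})=l_-\}$. Those events are determined by the \emph{full} stabilization of $n\d_0$, hence by instructions not yet explored at the relevant stopping time, so no such independence holds. The correct (and the paper's) move is weaker and suffices: on the fixed event the coupling identity holds pointwise, one then simply drops the event and uses only that the shifted stacks have the same law as i.i.d.\ stacks (Lemma~\ref{le:SMP}), paying for the randomness of $r_-,l_-$ --- and of $a_j,b_j,u_n(a_j),u_n(b_j)$ --- through the union bound rather than through independence. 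With these repairs your argument coincides with the paper's proof.
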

\begin{proof}
We use the union bound to fix the intervals $I_{j}^\pm  $ together with the odometer values on their boundaries, at which point the result follows by similar arguments to those of  Lemma \ref{le:step1}. 
Fix sites $ a_{j-1} \leq 0 \leq b_{j-1} $ with $\max \{ -a_{j-1} , b_{j-1} \} \leq n/(\z_{in} +\d )$, and odometer values $0 \leq r_- , l_+ \leq n^4 $. Then we have
	\[ \begin{split} 
	& \bbP  (E_{j-1}  \cap E_j^c \cap \{ j\leq K \} \cap \{ I_{j-1} = (a_{j-1} , b_{j-1 }), 
	u_n(a_{j-1}) = r_-  , u_n(b_{j-1}) = l_+ \} ) 
	\\ & \leq 
%	\sum_{  a_{j-1} - n^4 \leq a_j < a_{j-1} } 
	\sum_{a_{j} } 
	\bbP \big(u_n (a_{j-1} )= r_- ,  \cS^{IDLA}_{(-\infty , a_{j-1} )} (r_- \d_{a_{j-1}} ) = \one _{(a_j , a_{j-1})} , 
	|\cS (n\d_0 )|_{(a_j , a_{j-1})} > (\z_{in} + \d ) |a_j - a_{j-1}| \big)
	 \\ & \;  + 
%	 \sum_{b_{j-1} < b_j \leq b_{j-1} + n^4 } 
\sum_{b_j}
	\bbP \big(u_n (b_{j-1})= l_-  ,  \cS^{IDLA}_{( b_{j-1}, +\infty )} (l_+ \d_{b_{j-1}} ) = \one _{(b_{j-1} , b_{j})} , 
	|\cS (n\d_0 )|_{(b_{j-1} , b_{j})} > (\z_{in} + \d ) |b_j - b_{j-1}| \big) .
	\end{split}\]
We bound each term in the first summation as follows. Denote by $\rho = (\rho_k)_{k\in \bbZ}$ the underlying stacks of instructions, and write $u^\infty_{r_-}$ for the odometer function associated to the stabilization $\cS^{IDLA} _{(-\infty , a_{j-1})} (r_- \d_{a_{j-1}}) $. Then $u^\infty_{r_-}$ is supported in $[a_{j-1} - n^4 , a_{j-1}]$, since the $r_- $ particles can fill an interval of length at most $r_- \leq n^4$ to the left of $a_{j-1}$. 
We define the shifted stacks of instructions $\tilde \rho = (\tilde{\rho }_k )_{k\in \bbZ}$ by setting 
	\[ \tilde \rho_x (k) = \begin{cases} 
	\rho_x (k+ u^\infty_{r_-} (x))  & \mbox{if } x \leq a_{j-1}  , \\
	\rho_x(k) & \mbox{otherwise.}
	\end{cases} \]
Then, by Lemma \ref{le:crucial}, on the event $\{ u_n (a_{j-1} )= r_- ,  \cS^{IDLA}_{(-\infty , a_{j-1} )} (r_- \d_{a_{j-1}} ) = \one _{(a_j , a_{j-1})} \}$ we have that 
	\[ \cS (n\d_0 ) |_{(-\infty , a_{j-1} ) } 
	= \tilde \cS_{(-\infty , a_{j-1})} ( \one_{(a_j , a_{j-1}) } ) , \]
where $\tilde \cS_I (\eta )$ denotes the stabilization of $\eta$ according to stacks $\tilde \rho $ with killing upon exiting $I$. This gives 
	\[ \begin{split} 
	\bbP & \big(u_n (a_{j-1} )= r_- ,  \cS^{IDLA}_{(-\infty , a_{j-1} )} (r_- \d_{a_{j-1}} ) = \one _{(a_j , a_{j-1})} , 
	|\cS (n\d_0 )|_{(a_j , a_{j-1})} > (\z_{in} + \d ) |a_j - a_{j-1}| \big)
%	\\ & = 
%	\bbP \big(u_n (a_{j-1} )= r_- ,  \cS^{IDLA}_{(-\infty , a_{j-1} )} (r_- \d_{a_{j-1}} ) = \one _{(a_j , a_{j-1})} , 
%	|\tilde \cS_{(-\infty , a_{j-1})} ( \one_{(a_j , a_{j-1}) } )|_{(a_j , a_{j-1})} > (\z_{in} + \d ) |a_j - a_{j-1}| \big)
	\\ & \leq \bbP \big( \cS^{IDLA}_{(-\infty , a_{j-1} )} (r_- \d_{a_{j-1}} ) = \one _{(a_j , a_{j-1})} , 
	|\tilde \cS_{(-\infty , a_{j-1})} ( \one_{(a_j , a_{j-1}) } )|_{(a_j , a_{j-1})} > (\z_{in} + \d ) |a_j - a_{j-1}| \big)
	\\ & \leq \bbP \big( |\cS_{(-\infty , a_{j-1})} ( \one_{(a_j , a_{j-1}) } )|_{(a_j , a_{j-1})} > (\z_{in} + \d ) |a_j - a_{j-1}| \big) , 
	\end{split}\]
where the last inequality follows from the Strong Markov Property. 
Similarly one shows that 
	\[ \begin{split} 
	\bbP & \big(u_n (b_{j-1})= l_-  ,  \cS^{IDLA}_{( b_{j-1}, +\infty )} (l_+ \d_{b_{j-1}} ) = \one _{(b_{j-1} , b_{j})} , 
	|\cS (n\d_0 )|_{(b_{j-1} , b_{j})} > (\z_{in} + \d ) |b_j - b_{j-1}| \big) 
	\\ & \leq \bbP \big( |\cS_{(b_{j-1} , +\infty )} ( \one_{(b_{j-1} , b_{j}) } )|_{(b_{j-1} , b_{j})} > (\z_{in} + \d ) |b_j - b_{j-1}| \big) . 
	\end{split}\] 
From this, with a similar argument to that of Lemma \ref{le:step1}, we get
	\[ \begin{split} 
	\bbP & (E_{j-1}  \cap E_j^c \cap \{ j\leq K \} \cap \{ I_{j-1} = (a_{j-1} , b_{j-1 }), 
	u_n(a_{j-1}) = r_-  , u_n(b_{j-1}) = l_+ \} ) 
	\\ & \leq 
	\sum_{a_j \leq n^4 } \bbP \big( B \cap \{ | \cS_{(-\infty , a_{j-1} )} (\one_{(a_j , a_{j-1})} ) |_{(a_j , a_{j-1} )} 
	> (\z_{in} +\d ) |a_j - a_{j-1}|  \} \big) 
	\\ & \;  + 
	\sum_{b_j \leq n^4 } \bbP \big( B \cap \{ | \cS_{(b_{j-1} , +\infty )  } (\one_{(b_{j-1} , b_j )} ) |_{(b_{j-1} , b_j )} 
	> (\z_{in} +\d ) |b_j - b_{j-1}| \} \big) 
	\\ & = 
	\sum_{a_j , l_-  \leq n^4 } \bbP \big( B \cap \{ u_n (a_j ) = l_- \} \cap \{  | \cS_{(a_j , a_{j-1} )} (\one_{(a_j , a_{j-1})} + l_- \d_{a_j}  ) |_{(a_j , a_{j-1} )} 
	> (\z_{in} +\d ) |a_j - a_{j-1}| \} \big) 
	\\ & \;  + 
	\sum_{b_j , r_+ \leq n^4 } \bbP \big( B \cap \{ u_n (b_j ) = r_+ \} \cap \{  | \cS_{( b_{j-1}, b_j  )} (\one_{(b_{j-1} , b_j )} + r_+ \d_{b_j} ) |_{(b_{j-1} , b_j )} 
	> (\z_{in} +\d ) |b_j - b_{j-1}| \} \big) 
	\\ & \leq 
	\sum_{a_j , l_- \leq n^4 } \bbP \big( | \cS_{(a_j , a_{j-1} )} (\one_{(a_j , a_{j-1})}  ) |_{(a_j , a_{j-1} )} 
	> (\z_{in} +\d ) |a_j - a_{j-1}| \big) 
	\\ & \; + 
	\sum_{b_j , r_+ \leq n^4 } \bbP \big( | \cS_{( b_{j-1}, b_j  )} (\one_{(b_{j-1} , b_j )} ) |_{(b_{j-1} , b_j )} 
	> (\z_{in} +\d ) |b_j - b_{j-1}| \big) 
	\\ & \leq 2 n^8 (n\e )^{-20} = 2 \e^{-20} n^{-12}
	\end{split}\]
for $n$ large enough. Here again the third inequality follows by evolving all the particles starting on boundary sites until they die, as in Lemma \ref{le:equality}. Moreover, in the last inequality we used that $l_- , r_+ \leq n^4$, so there are at most $n^8$ choices for the pair $(a_j , b_j) $. 
This gives 
	\[ \begin{split} 
	\bbP & ( E_{j-1}  \cap E_j^c\cap \{ j\leq K\} ) 
	\\ & \leq \sum_{a_{j-1} , b_{j-1}} \sum_{r_- , l_+} 
	\bbP  (E_{j-1}  \cap E_j^c \cap \{ j\leq K \} \cap \{ I_{j-1} = (a_{j-1} , b_{j-1 }), 
	u_n(a_{j-1}) = r_-  , u_n(b_{j-1}) = l_+ \} ) 
	\\ & \leq \frac{2\e^{-20} }{(\z_{in} +\d )^2} n^{-2} \leq n^{-3/2}  
	\end{split}\]
for $n$ large enough, since on the event $\{ j \leq K \} $ there are at most $n/(\z_{in} +\d ) $ choices for both $a_{j-1}$ and $b_{j-1}$.
\end{proof}
Now, on the event $E_j \cap \{ j\leq K\}$ we have 
	\[ | \cS (n\d_0 )|_{I_j} \leq ( \z_{in} +\d ) \# I_j  + 2j -2 ,  \]
where the additional term $2j-2$ comes from the boundary sites. 
If 
	\[ \# I_j \geq \frac{n}{\z_{in} +\d } (1-3\e ) \]
set $K=j$ and stop, setting $E_k = E_j $ for all $k\geq j$. Otherwise $K\geq j+1$ and we move on to the next step.

\subsection*{Closing the iterative scheme}
It remains to bound the number of iterations. Write $L_j =  \# I_j$ for all $j\geq 1$. Let $k_0\geq 2$ be and integer to be chosen later, and recall that $K$ was defined in the previous subsection as 
	\begin{equation}\label{Kdef}
	 K:= \inf \left\{ j\geq  2 :  L_j \geq \frac{n}{\z_{in}+\d }(1-3\e ) \right\} . 
	 \end{equation}
Then on the event $ E_{k_0} \cap \{ K > k_0\}$ the following inequalities hold for $n$ large enough:
%	\[ \begin{split} 
%	L_1 & = 2\left\lfloor \frac n2 (1-\e ) \right\rfloor + 1 \geq n(1-\e ) -1 \geq n(1-2\e ), \\
%	L_2 &  \geq L_1 + n - |\cS (n\d_0) |_{ I_1} -\e n 
%	\geq ( 1-\z_{in} - \d ) L_1 + n(1-2\e ) \\
%	L_3 & \geq L_2 + n - |\cS (n\d_0) |_{ I_2}  \geq  (1-\z_{in}-\d ) L_2 + n(1-2\e )  \\ & 
%	\geq n\left( 1-2\e  \right)\left[ 1 + (1-\z_{in} -\d ) \right] + (1-\z_{in} -\d )^2 L_1 \\
%	&	\vdots \\
%	L_{k_0} & \geq  n\left( 1-2\e \right)\left[ \sum_{j=0}^{k_0-2}(1-\z_{in} -\d )^j \right] 
%	+ (1-\z_{in} -\d )^{k_0-1} L_1 \\ & 
%	\geq n(1-2\e ) \left[ \frac{1}{\z_{in}+\d } - \left( \frac{1}{\z_{in}+\d } -1 \right) (1-\z_{in}-\d )^{k_0-1} \right]
%	\\ & = \frac{n(1-2\e )}{\z_{in}+\d }  \left[ 1- (1-\z_{in}-\d )^{k_0} \right] .
%	\end{split} \]
%Here the extra $-n\e $ term in the lower bound for $L_2$ accounts for the fact that one between $I_2^+$ and $I_2^-$ might contain less than $n\e $ particles. 
	\[ \begin{split} 
	L_1 & = 2\left\lfloor \frac n2 (1-\e ) \right\rfloor + 1 \geq n(1-\e ) -1 \geq n(1-2\e ), \\
	L_j &  \geq L_{j-1} + n - |\cS (n\d_0) |_{ I_{j-1}} -\e n 
	\\ & \geq L_{j-1} + n (1-2\e ) - (\z_{in} + \d ) L_{j-1}   , \qquad 2 \leq j \leq k_0 ,
	\end{split} \]
where the $-\e n $ extra term in the first lower bound for $L_j$ accounts for the fact that one of $I_j^+$ and $I_j^-$ might be empty. 
Now, using that for all $j \leq k_0$ one has $L_{j-1} \leq \frac{n}{\z_{in} + \d } (1+\e )$, we find $ n (1-2\e ) - (\z_{in} + \d ) L_{j-1}  \geq \e n $, 
which shows that 
	\[ L_j \geq L_{j-1} + n\e \geq n ( 1 + (j-3) \e ) \] 
for all $2\leq j \leq k_0$. 
Choose $k_0$, depending only on $\z_{in}$, $\d $ and $\e$, such that 
	\[ 1 + (k_0 -3 ) \e \geq \frac{1-3\e }{\z_{in} + \d } .\]
Then
	\[ L_{k_0} \geq  \frac{n(1-2\e )(1-\e )}{\z_{in} + \d } \geq \frac{n}{\z_{in}+\d }(1-3\e ) ,  \]
and hence event $\{ K >  k_0 \} \cap E_{k_0} $ is empty. Then 
%	\begin{equation}\label{k0}
%	  (1-\z_{in}-\d )^{k_0} \leq \e , 
%	  \end{equation}
%to have that
%	\[ L_{k_0} \geq  \frac{n(1-2\e )(1-\e )}{\z_{in} + \d } \geq \frac{n}{\z_{in}+\d }(1-3\e ) .  \]
%Recall that $K$ was defined in the previous subsection as 
%	\begin{equation}\label{Kdef}
%	 K:= \inf \left\{ j\geq  2 :  \# I_j \geq \frac{n}{\z_{in}+\d }(1-3\e ) \right\} . 
%	 \end{equation}
%Then the above argument shows that the event $\{ K > k_0 \} \cap E_{k_0} $ is empty, and so 
	\[ \begin{split} 
	\bbP (K > k_0 ) & = \bbP ( \{ K > k_0 \} \cap ( E_1^c \cup E_2^c \cup \cdots \cup E_{k_0}^c ) )
	\\ & \leq \bbP (E_1^c) + \sum_{j=2}^{k_0} \bbP (E_{j-1}  \cap E_j^c \cap \{ K\geq j\}) 
	\leq (k_0+1) n^{-3/2} . 
	\end{split}\]
To close the proof of the inner bound, take $ \d \in \left( 0 ,  \e -3\e^2 (1-\z_{in} ) \right] $,
so that 
	\[ L_{K} \geq    \frac{n}{\z_{in}+\d }(1-3\e ) \geq \frac{n}{\z_{in} +\e }  .  \]
Then 
	\[ \left\{ |\mbox{supp} (u_n)| \geq \frac{n}{\z_{in} +\e }  \right\} 
	\supseteq  \{ K \leq k_0\}  , \]
and so 
	\[ \bbP \left(  |\mbox{supp} (u_n)| < \frac{n}{\z_{in} +\e }  \right) 
	\leq \bbP ( K>k_0 ) \leq (k_0 +1 ) n^{-3/2} \]
which is summable in $n$. 
\begin{Remark}
A more careful use of the above argument shows that in fact one only needs a logarithmic number of steps in $1/\e $ in order to close the iterative scheme. We leave the details to the reader.
\end{Remark}
\begin{Remark} \label{rem1}
With the same arguments we can get an inner bound that holds with higher probability in $n$. More precisely, for any $\g >0$ we can define 
	\[ \z_{in, I} (\g ) := \inf \left\{  \z >0 : \bbP (|\cS_I(\one_I)| > \z  \# I ) \leq ( \# I)^{-(\g + 20) } \right\} , \]
and 
	\[ \z_{in}(\g ) := \limsup_I \z_{in,I}(\g )  , \]
to have that for any $\e >0$ 
	\[ \bbP \left( \# A_n \geq \frac{n}{\z_{in}(\g )} (1-\varepsilon ) \right) \geq 
	1-n^{-\g } \]
for $n$ large enough. 
\end{Remark}

\section{The outer bound}
\label{s.outer}

To prove the outer bound in Theorem \ref{th:intro} we show that the ARW odometer $u_n$ is stochastically dominated by the sum of two auxiliary odometer functions, corresponding to an IDLA process on a Bernoulli site percolation, and to the ARW stabilization of a subcritical Bernoulli configuration on $\bbZ$. 

Fix $\z \in (0,1)$, and enlarge the probability space to include an independent Bernoulli$(\z )$ site percolation $\eta \in \{ 0,1\}^{\bbZ}$, where each site $x \in \bbZ$ is declared open ($\eta(x) =1$) with probability $\z $, and closed ($\eta (x) =0$) with probability $1-\z $. Write $\bfP_\z$ for the product measure $\bbP \times \mu_\z$, where $\bbP$ is the law of the stacks of ARW instructions \eqref{instructions} with sleep rate $\lambda$, and $\mu_\z =  \prod_{x \in \bbZ} ((1-\z)\delta_0 + \z \delta_1)$ is the law of the site percolation $\eta$. Write $\bfE_\z$ for expectation with respect to $\bfP_\z$.

We interpret $\eta $ as an ARW particle configuration, consisting of exactly one active particle at each open site. Denote by $w : \bbZ \to \bbN \cup \{ \infty \}$ the $ARW^\lambda$ odometer of $\eta $ on $\bbZ$. 

The next result bounds the odometer $w$ under $\bfP_\z$ on large intervals, for $\z < \z_{out}$. 
\begin{Lemma}\label{le:w}
For any $\z \in (0, \z_{out})$ we have  
	\[ \bfP_\z \left( \sup_{x\in B_r} w(x) > \frac{r}{(\log r)^3} \right) \leq r^{-3/2} \]
for $r$ large enough. 
\end{Lemma}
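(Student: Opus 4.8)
The plan is to combine a crude first-moment (union) bound over the $2r+1$ sites of $B_r$ with a third-moment Markov inequality, the point being that the cubic tail bound afforded by Definition~\ref{def:zout} beats the target $r^{-3/2}$ with polynomial room to spare.

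First I would record two preliminary facts. (i) \emph{Translation invariance.} The Bernoulli$(\z)$ configuration $\eta$ and the i.i.d.\ stack array $\rho$ are both translation invariant in law, and the $\ARW^\lambda$ odometer is a translation-equivariant functional of $(\eta,\rho)$; hence $w(x)$ has the same law as $w(0)$ for every $x\in\bbZ$. (ii) \emph{Finiteness of the third moment at level $\z$.} For $\z<\z_{out}$, by definition of the supremum there is $\z'\in(\z,\z_{out}]$ with $\bfE_{\z'}(w(0)^3)<\infty$; coupling $\eta_\z\le\eta_{\z'}$ pointwise and using monotonicity of the odometer under addition of particles (a consequence of the abelian property and the least action principle, Proposition~\ref{pr:facts}) gives $w_\z(0)\le w_{\z'}(0)$ pointwise, so $\bfE_\z(w(0)^3)\le\bfE_{\z'}(w(0)^3)<\infty$. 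Write $M=M(\z,\lambda)$ for this finite constant.

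The estimate is then immediate. By the union bound followed by Markov's inequality applied to $w(x)^3$ and fact (i),
\[
\bfP_\z\!\left(\sup_{x\in B_r} w(x) > \frac{r}{(\log r)^3}\right)
\le \sum_{x\in B_r}\bfP_\z\!\left(w(x)^3 > \frac{r^3}{(\log r)^9}\right)
\le (2r+1)\,\frac{M(\log r)^9}{r^3}.
\]
Since $(2r+1)M(\log r)^9/r^3 = O\big((\log r)^9/r^2\big)$, which is $o(r^{-3/2})$, the right-hand side is at most $r^{-3/2}$ for all $r$ large enough, as claimed.

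I do not expect a genuine obstacle here: the only steps needing a line of justification are the translation invariance of the law of $w$ (so that every summand in the union bound carries the same third-moment bound) and the reduction in fact (ii) of third-moment finiteness at level $\z$ to a level $\z'\in(\z,\z_{out})$ via the monotone coupling. Everything else is the elementary observation that a cubic-moment tail decay of order $r^{-3}(\log r)^9$ survives a union bound over $O(r)$ sites with margin to reach $r^{-3/2}$.
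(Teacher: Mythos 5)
Your proof is correct and is essentially the paper's argument: a union bound over the $O(r)$ sites of $B_r$ followed by Markov's inequality for the third moment, which beats $r^{-3/2}$ with room to spare. The only difference is that you spell out the translation invariance of the law of $w$ and the monotone-coupling reduction showing $\bfE_\z(w(0)^3)<\infty$ for every $\z<\z_{out}$, both of which the paper uses implicitly.
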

\begin{proof}
Since $\z< \z_{out}$ we have that  $\bfE_\z ( w(0)^3 ) \leq C$ for some finite constant $C>0$. Hence 
	\[ \begin{split} 
	\bfP_\z \left( \sup_{x\in B_r} w(x) > \frac{r}{(\log r)^3} \right)
	 & \leq \sum_{x \in B_r } \bfP_\z \left( w(x) > \frac{r}{(\log r)^3} \right)
	 \leq (  \# B_r ) \frac{(\log r)^9}{r^3}  \bfE_\z (w(0)^3 ) 
	 \\ & \leq C (2r +1)\frac{(\log r)^9}{r^3}
	 \leq r^{-3/2} 
	 \end{split}\]
for $r$ large enough. 
%\textcolor{blue}{[We have not tried to optimize the power of the logarithm]}
\end{proof}
We are now ready to explain the proof of the outer bound, which consists in stabilizing the configuration $ n\d_0$ in two phases. To start with, sample an i.i.d.\ Bernoulli$(\z )$ site percolation configuration on $\bbZ$, with 
	\[ \z = \z_{out} - \d, \]
$\d$ small to be chosen later. 
In the initial $\z$-IDLA phase we move the $n$ particles one at the time: each particle walks until it finds an unoccupied open site, where it stops. Note that this is equivalent to IDLA dynamics with particles only allowed to stop at open sites. Thus in this phase if a particle tries to fall asleep before reaching an unoccupied open site we force it to wake up, which, by Proposition \ref{pr:facts}, can only increase the odometer. 
\begin{Definition}[$\z$-IDLA odometer function]
We let $v_n : \bbZ \to \bbN $ denote the odometer function of the $\z$-IDLA phase, that is $v_n(x)$ counts the number of instructions used at $x$ until each particle finds its spot. %until stabilization 
\end{Definition}
We remark that the odometer function counts the number of sleep instructions too, although they have no effect on the IDLA dynamics. 
The next lemma tells us that $v_n$ is supported in a ball of radius $\frac n {2\z_{out}} (1+\e )$, with high probability. 
\begin{Lemma}\label{le:vn}
For any $\e >0$
	\[ \bfP_\z \left( \mbox{supp}(v_n) \subseteq B_{\frac{n}{2\z_{out}} (1+\e ) }
	\right) \geq 1 - e^{-n / (\log n)^2}  \]
for $n$ large enough. 
\end{Lemma}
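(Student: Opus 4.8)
The plan is to read $v_n$ as the odometer of Internal DLA run on the Bernoulli$(\z)$ environment $\eta$, with $\z=\z_{out}-\d$, and to prove a quantitative one‑dimensional shape estimate: after $n$ particles the occupied cluster of this process is, with overwhelming probability, the set of open sites of an almost centred interval of half‑length close to $n/(2\z)$, which for $\d$ small sits well inside $B_{\frac{n}{2\z_{out}}(1+\e/2)}$. First I would record the structure of site‑percolation IDLA: by induction on the number of released walkers the occupied set after $k$ particles is exactly $\{x\in[L_k,R_k]:\eta(x)=1\}$ for random \emph{open} endpoints $L_k\le 0\le R_k$ (with a short transient of $O(G)$ particles, $G$ as below, if $\eta(0)=0$), because once every open site of $[L_{k-1},R_{k-1}]$ is occupied the $k$‑th walker, started at $0$, is stopped only by the first vacant open site strictly outside that interval, so it settles at $R_{k-1}+g^+_{k-1}$ or at $L_{k-1}-g^-_{k-1}$, where $g^\pm_{k-1}\ge 1$ denote the distances from $R_{k-1},L_{k-1}$ to the nearest open site outside. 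In particular the $k$‑th walker never leaves $[L_{k-1}-g^-_{k-1},R_{k-1}+g^+_{k-1}]$, so
\[ \mathrm{supp}(v_n)\subseteq[\,L_n-G,\ R_n+G\,],\qquad G:=\max\{\text{run lengths of consecutive closed sites of }\eta\text{ in the range visited}\}. \]
It therefore suffices to show that $G=o(n)$ and $\max\{R_n,|L_n|\}\le\frac{n}{2\z_{out}}(1+\e/2)$, each with failure probability $\le e^{-n/(\log n)^2}$.

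Next I would fix the good environment event $\cG$ on which (i) no run of consecutive closed sites inside $B_{Cn}$ has length exceeding $cn/(\log n)^2$, (ii) $R_n-L_n\le Cn$, and (iii) $\big|\#\{x\in[0,m]:\eta(x)=1\}-\z m\big|\le\theta n$ and the analogous bound on $[-Cn,0]$ hold for all $|m|\le Cn$; here $C>2/\z$, while $c$ is large and $\theta>0$ is a small constant, both to be chosen later depending on $\e$ and $\z_{out}$. By a union bound $\bfP_\z(\cG^c)\le e^{-n/(\log n)^2}$ for $n$ large: the binding contribution is (i), a single run of that length having probability $(1-\z)^{cn/(\log n)^2}$ so that choosing $c$ large suffices; the counting deviations in (iii) and the count bound (ii) fail with only the exponentially small probabilities $e^{-c\theta^2 n}$ and $e^{-cn}$. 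On $\cG$ one has $G\le cn/(\log n)^2=o(n)$.

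The core step is the balancing estimate, which I would phrase through the \emph{integer‑valued} count $X_k:=\#\{\text{of the first }k\text{ walkers that settle to the right of the current cluster}\}$, rather than the imbalance $R_k-|L_k|$, precisely because $X_k$ has increments in $\{0,1\}$ while $R_k-|L_k|$ jumps by whole gaps. Conditioning on $\eta$ and on the first $k$ trajectories (filtration $\cF_k$), gambler's ruin for the symmetric walk of the $(k{+}1)$‑st particle, started at $0$ and absorbed at $L_k-g^-_k<0<R_k+g^+_k$, gives
\[ \bbE[X_{k+1}-X_k\mid\cF_k]=\frac{|L_k|+g^-_k}{|L_k|+g^-_k+R_k+g^+_k}=1-\frac{X_k}{k}+\varepsilon_k, \]
where the second identity uses $\#\{\text{occupied}\}=k$ together with (ii)–(iii) to replace $|L_k|,R_k$ by $(k-X_k)/\z,\;X_k/\z$ up to additive errors of size $O(\theta n)+G$; a short computation then yields $|\varepsilon_k|\le\min(1,\,C\theta n/k)$, hence $\sum_{k\le n}|\varepsilon_k|\le C\theta n\,(1+\log(1/\theta))$ on $\cG$. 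Thus $X_k-k/2$ obeys, up to a total drift of size $C\theta(1+\log(1/\theta))\,n$, the same mean‑reverting recursion $\bbE[(X_{k+1}-\tfrac{k+1}{2})\mid\cF_k]=\tfrac{k-1}{k}(X_k-\tfrac k2)+\varepsilon_k$ as for ordinary one‑dimensional IDLA. The transform $M_k:=(k-1)(X_k-\tfrac k2)-\sum_{j<k}j\varepsilon_j$ is a martingale whose increments are $O(k)$, so Azuma gives $\bfP_\z(|M_n|>c\theta n^2)\le e^{-c\theta^2 n}$, and since $X_n-\tfrac n2=(n-1)^{-1}\!\big(M_n+\sum_{j<n}j\varepsilon_j\big)$ this yields $|X_n-\tfrac n2|\le C'\theta(1+\log(1/\theta))\,n$ off an event of probability $\le e^{-c\theta^2 n}$; on $\cG$ one then also has $R_n=X_n/\z+O(\theta n)$ and $|L_n|=(n-X_n)/\z+O(\theta n)$.

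Finally I would combine these: on $\cG$ intersected with the complement of the last event, $R_n$ and $|L_n|$ are both at most $\frac{n/2+C'\theta(1+\log(1/\theta))n}{\z_{out}-\d}+O(\theta n)$, which — choosing first $\theta$ small enough (depending on $\e,\z_{out}$), then $\d$ small enough — is at most $\frac{n}{2\z_{out}}(1+\tfrac\e2)$; adding the $G=o(n)$ overshoot of the first paragraph puts $\mathrm{supp}(v_n)$ inside $B_{\frac{n}{2\z_{out}}(1+\e)}$ for $n$ large, with total failure probability $\bfP_\z(\cG^c)+e^{-c\theta^2 n}\le e^{-n/(\log n)^2}$ for $n$ large. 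I expect the main obstacle to be exactly this balancing step: getting the stretched‑exponential failure bound $e^{-n/(\log n)^2}$ forces one simultaneously to track the integer‑valued right‑count (so that the martingale transform has polynomially‑growing, not exponentially‑large, increments) and to calibrate the environment tolerances — the $n/(\log n)^2$ gap scale and the parameter $\theta$ — so that every accumulated error is a controllably small multiple of $n$ while $\bfP_\z(\cG^c)$ stays below $e^{-n/(\log n)^2}$; the remaining ingredients (the cluster structure, the union bounds, the gambler's‑ruin identity) are routine.
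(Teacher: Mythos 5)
Your argument is correct in outline, but it takes a genuinely different route from the paper's. In the paper, Lemma \ref{le:vn} is deduced in one line from the shape theorem for $\z$-IDLA on Bernoulli percolation (Theorem \ref{th:IDLABernoulli} in Appendix \ref{appendix}), applied with $\z=\z_{out}-\d$ and $\d$ chosen so that $\frac{n}{2(\z_{out}-\d)}(1+2\e)=\frac{n}{2\z_{out}}(1+\e)$; that theorem is proved there by a Lawler--Bramson--Griffeath-type counting argument ($N_z\ge M_z-L_z$ plus concentration of sums of indicators) for the inner bound, and the outer bound then follows from a Chernoff count of open sites in the slightly smaller ball (so at most $n\e/2$ particles settle outside it) together with stochastic domination of the percolation gaps by i.i.d.\ geometric variables. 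You instead exploit the one-dimensional interval structure of $\z$-IDLA directly: gambler's ruin gives the conditional probability that the next particle settles on the right, and an Azuma bound for the martingale transform of the right-settling count $X_k$, combined with environment concentration (density fluctuations at scale $\theta n$, maximal closed run of length $O(n/(\log n)^2)$), pins both endpoints near $n/(2\z)$. This is closer in spirit to the paper's Appendix treatment of standard IDLA on $\bbZ$, extended to the random environment; it is self-contained, simultaneously yields the inner bound of Theorem \ref{th:IDLABernoulli}, and avoids hitting-probability estimates, at the cost of the drift-error bookkeeping (your $\e_k$-terms) which the counting route avoids and which would not extend to higher dimensions. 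Two points need routine care in a write-up: the bound $|\varepsilon_k|\le\min(1,C\theta n/k)$ holds only on the good environment event, so Azuma should be applied conditionally on the environment (your events (i)--(iii) are, or can be rephrased to be, functions of $\eta$ alone, using that exactly $n$ particles settle and the density lower bound), or else the martingale should be stopped when the bound fails; and the interval-structure claim should allow $0\notin[L_k,R_k]$ in the early steps when $\eta(0)=0$ (replace $[L_k,R_k]$ by its union with $\{0\}$), which only affects the trivially bounded $\varepsilon_k$ for small $k$.
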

This follows from Theorem \ref{th:IDLABernoulli} in Appendix \ref{appendix}, by taking $\z = \z_{out} - \d$ and choosing $\d = \d (\z_{out} , \e )$ such that 
	\[ \frac{n}{2(\z_{out} -\d )} (1+2\e ) = \frac{n}{2\z_{out} } (1+\e ) .\] 
\begin{figure}[!h] \label{fig:IDLABernoulli}
  \centering
    \includegraphics[width=.9\textwidth]{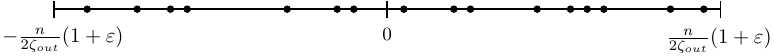}
    \caption{An illustration of the particle configuration at the end of the $\z$-IDLA phase.}
\end{figure}

At the end of the $\z$-IDLA phase we are left, with high probability, with a configuration of $n$ active particles, supported on the open sites of the ball $B_{\frac{n}{2\z_{out}} (1+\e ) } $. It only remains to stabilize such a configuration.  By Abelianness of ARW, assuming that each open site of the Bernoulli configuration $\eta$ on $\bbZ$ contains an active particle cannot decrease the ARW odometer. Hence we obtain
	\begin{equation}\label{pointwise}
	 u_n  \leq v_n + w , 
	 \end{equation}
with the inequality holding pointwise in both the site $x$ and the realization of the Bernoulli configuration $\eta$. 
%By Lemma \ref{le:vn}, 
%	\[ \bfP_\z \left( u_n(x) \leq w(x) \, \mbox{ for all } |x| >  \frac{n}{2\z_{out}} (1+\e )
%	\right) \geq 1- e^{-n / (\log n)^2}    \]
%for $n$ large enough. 
\begin{Lemma} \label{le:sup_u}
Let 
	\begin{equation}\label{rr}
	 r_- = \frac{n}{2\z_{out}} (1+\e ) , \qquad r = \frac{n}{2\z_{out}} (1+3\e ) . 
	 \end{equation}
Then  
	\[ 
	\bbP \left( \sup_{x\in B_{r} \setminus B_{r_-}} u_n(x) >  \frac {r}{(\log r)^3} \right) 
	\leq n^{-5/4}
	\]
for $n$ large enough. 
\end{Lemma}
\begin{proof}
Note that by \eqref{pointwise}
	\[  \big\{ \mbox{supp}(v_n) \subseteq B_{r_- } \big\} 
	\subseteq  \big\{ u_n(x) \leq w(x) \, \mbox{ for all } |x| > r_-
	\big\} .   \]
We combine this with Lemma \ref{le:w}, which bounds $w$, to get 
	\[ \begin{split} 
	\bbP \left( \sup_{x\in B_{r} \setminus B_{r_-}} u_n(x) >  \frac {r}{(\log r)^3} \right) 
	&= \bfP_\z \left( \sup_{x\in B_{r} \setminus B_{r_-}} u_n(x) >  \frac {r}{(\log r)^3} \right) 
	\\ &  \leq \bfP_\z \left( \mbox{supp}(v_n) \nsubseteq B_{r_-}
	\right) + \bfP_\z \left( \sup_{x\in B_{r}} w(x) > \frac{r}{(\log r)^3} \right) 
	\\ & \leq e^{-n / (\log n)^2}  + r^{-3/2} \leq n^{-5/4}
	\end{split}\]
for $n$ large enough. In the first equality we have used that the odometer function $u_n$ is independent of the Bernoulli percolation configuration. 
\end{proof}

It only remains to show that on the event 
	\[  \sup_{x\in B_{r} \setminus B_{r_-}} u_n(x) \leq   \frac {r}{(\log r)^3}
	\]
it is unlikely that $\max \{ u_n(-r) , u_n(r) \}>0$. 
%Note that supp$(u_n)$ is an interval containing the origin by construction. 
Let $r_-, r$ be defined as in \eqref{rr} above, so that $r-r_- = n\e / \z_{out}$. We restrict to the event 
	\[  \sup_{x\in B_{r} \setminus B_{r_-}} u_n(x) \leq   \frac {n}{(\log n)^2} , \]
which holds with $\bbP$-probability at least $1-n^{-5/4}$ for $n$ large enough. On this event 
	\[ \max \{ u_n(-r_-) , \, u_n(r_- ) \} \leq  \frac {n}{(\log n)^2} . \]
To conclude the proof of the outer bound we will use the following fact, which is a one--sided version of Lemma \ref{le:crucial}. 
\begin{Lemma}\label{le:one_side}
Fix an integer $a>0$. Let $\cS (n\d_0)$ and $u_n$ denote the ARW stabilization of $n\d_0$ and the odometer function respectively, using i.i.d.\ stacks $\rho = ( \rho_x(k) )_{x\in \bbZ , k\geq 1}$. For $N\geq 0$ integer denote by $\tilde{\cS}_{(a, +\infty )} (N\d_a )$ the stabilization of the particle configuration $N \d_a $ with killing on $(-\infty , a ] $, with i.i.d.\ stacks $\tilde{\rho} = ( \tilde{\rho}_x(k) )_{x\in \bbZ , k\geq 1}$. Then there exists a coupling of $\rho $ and $\tilde \rho$ such that for all $N\geq 0$ on the event $\{ u_n(a) = N\}$ it holds 
	\[ \cS (n\d_0 ) \big|_{(a , +\infty )} = \tilde \cS_{(a,+\infty )} (N\d_a ) \big|_{(a,+\infty )} . \] 
\end{Lemma}
This can be proved using the same argument of Lemma \ref{le:crucial}, we leave the details to the reader. 
The outer bound then follows from the next result. 
\begin{Lemma}
With $r$ defined as in \eqref{rr}, it holds  
	\[
	 \bbP \big( \max\{  u_n (-r) , u_n (r) \} >0 \big) \leq 2n^{-5/4 }
	\]
for $n$ large enough. 
\end{Lemma}
\begin{proof}
By Lemma \ref{le:sup_u}
	\[ \begin{split} 
	\bbP \big( \max\{  u_n (-r) , u_n (r) \} >0 \big) 
	& \leq \bbP \left( u_n (r) >0 , \; u_n(r_- ) \leq  \frac {n}{(\log n)^2} \right)
	 \\ & \quad + \bbP \left( u_n (-r) >0 , \; u_n(-r_- ) \leq  \frac {n}{(\log n)^2} \right) + n^{-5/4} .
	 \end{split}
	 \]
We prove that 	
	\[ \bbP \left( u_n (r) >0 , \; u_n(r_- ) \leq  \frac {n}{(\log n)^2} \right) \leq n^{-2} \]
for $n$ large enough, since then the same holds for the specular event. To see the above inequality, note that by Lemma \ref{le:one_side} we can build the stable configuration $\cS (n\d_0)$ on $(r_- , + \infty )$ as follows. Let $N \leq n/(\log n)^2$ be arbitrary but fixed. On the event $ u_n (r_- ) = N $, release $N$ particles from $r_- $ according to the following rules: 
\begin{itemize}
\item[(i)] Particles that move to the left of $r_-$ on the first step die. 
\item[(ii)] Particles that move to the right of $r_-$ on the first step evolve according to ARW dynamics, with killing upon returning to $r_-$. 
\end{itemize}
Note that forcing particles to wake up during this procedure can only increase the probability that $u_n(r)>0$. Take $C_{\l}$ to be a positive constant such that 
%$\D = 2 C_\l  \log n $ is an integer, and 
if $\D = 2 \lceil C_\l  \log n \rceil $ and $G \sim Geometric (\l /(1+\l ))$ then 
	\[ \bbP (G \geq \D /2 ) \leq n^{-5} \]
for $n$ large enough. We mark all the points of the form 
	\[ \{ r_- + k\D : 1\leq k \leq K_n \}, \quad \mbox{ for }  
	K_n = \left\lfloor \frac{r-r_-}{\D} \right\rfloor -1. \] 
Note that $K_n>N$. We argue that on the event $\{ u_n (r_-) = N\}$ it is unlikely that $u_n(r) >0$, since even if we force the $N$ particles starting at $r_-$ to move until they occupy consecutive vertices of the $\Delta$-grid, upon release they would all fall asleep before being able to reach another grid site, and hence before any of them can reach $r$.

Restrict to the event  $\{ u_n (r_-) = N\}$. Release $N$ particles from $r_-$ according to (i)-(ii) above, except if a particle which is alive (and hence necessarily to the right of $r_-$) tries to fall asleep at a non-marked point, we force it to wake up, while if a particle reaches an unoccupied marked point we stop it there. At the end of this procedure we are left with at most $N$ active particles, at most one for each marked point. We let them evolve until they all settle. By the choice of $\D$, the probability that at least one of them travels by more than $\D/2$ steps before trying to fall asleep is at most $N n^{-5}$. But since the marked points are $\D$-spaced, if all the particles travel by at most $\D/2$ they cannot interact, so they all manage to fall asleep, and $u_n(r)=0$. Hence 
	\[ \begin{split} 
	\bbP \left(u_n(r) > 0  , \;  u_n(r_- ) \leq  \frac{n}{(\log n)^2} \right) & 
	\leq \sum_{N=1}^{\lceil n/(\log n)^2 \rceil } \bbP (u_n(r_-) =N , u_n(r)>0) 
	\\ & \leq\sum_{N=1}^{\lceil n/(\log n)^2 \rceil } N n^{-5} \leq n^{-2 }
	\end{split} \]
for $n$ large enough. 
\end{proof}

\begin{Remark}\label{rem2}
With the same arguments we could get an outer bound that holds with higher probability in $n$, at the price of changing the definition of $\z_{out}$. More precisely, for arbitrarily large $\g >0$ we can define 
	\[ \z_{out}(\g )  := \sup \{\z >0 : \bfE_\z  ( w(0)^{\g + 2 } )<\infty \}
	\]
to have that any $\e >0$
	\[ \bbP \left( 
	\mbox{supp}(u_n) \subseteq  B_{\frac{n}{\z_{out}(\g )} (1+\varepsilon )}  \right) 
	\geq 1 - n^{-\g } \]
for $n$ large enough. 
\end{Remark}

\subsection{Edge cases}
The main interest of Theorem~\ref{th:intro} is that it relates Experiment 2 to Experiments 1 and 3. But its applicability is limited by an incomplete understanding of Experiments 1 and 3. In particular, it would be interesting to rule out the edge cases $\z_{out}=0$ and $\z_{in}=1$.
By Definition~\ref{def:zout}, $\z_{out} \leq \z_c$ where $\z_c$ is the \emph{critical density}
	\[ \z_c := \sup \{\z >0 : \bfP_\z (w(0) < \infty)=1 \} \]
studied in \cite{rolla2019universality}. In fact, we conjecture that $\z_{out} = \z_c$.

It would be of interest to adapt the proof that $\z_c \geq \frac{\lambda}{1+\lambda}$ \cite[\textsection 4]{rolla2020} to give a lower bound on $\z_{out}$. 
That proof uses ``traps'' arising from a simple random walk in $\bbZ$ killed only on one side, so the resulting upper bound on $w(0)$ does not even have a first moment. To obtain an upper bound with finite moments, one could instead set traps on both sides of an interval $(-a,b)$ as in \cite{BGHR}, and then use the one-sided trap procedure to stabilize the half-lines $(-\infty,-a)$ and $(b,\infty)$. This procedure succeeds in stabilizing $\eta$ on $\bbZ$ provided that $w(-a)=w(b)=0$. 
The main difficulty is then to show that for small enough $\z$ the random variable 
	\[ b := \inf \{ x>0 \,:\, w(x)=0 \} \]
has finite moments.

This difficulty disappears in the case that all particles start at the origin, because there are no particles starting outside $(-a,b)$.  So a simple two-sided trap procedure in an interval $[-Cn,Cn]$ shows that there is a finite constant $C=C(\lambda)$ such that
	\begin{equation} \label{eq:linearouterbound} P(A_n \subset B_{Cn} \text{ eventually}) = 1. \end{equation}

%The summarize: we believe, but do not prove in this paper, that $\z_{out}>0$. So if one only wants to prove an outer bound of the form \eqref{eq:linearouterbound} for some $C$, one should not use Theorem~\ref{th:intro} but rather a direct two-sided trap procedure as in \cite{BGHR}. 

Also of interest would be to adapt the proof in \cite{GHRR} that $\zeta_c<1$ to give an upper bound on $\z_{in}$.

%\begin{Remark}
%The assumption $ \z_{in} >0$ can be dropped at the price of slightly changing the statement. Let us first show that if $\z_{in} =0$ then $\z_{out}=0 $. Indeed, suppose for a contradiction that $\z_{in} =0$ but $\z_{out}>0$. Then  proof of Theorem \ref{th:intro} can be adapted to show that for all $\e >0$ it holds 
%	\begin{equation}\label{eq:shape}
%	\bbP \left( \# A_n \geq \frac{n}{\e} 
%	\mbox{ and } 
%	A_n \subseteq  B_{\frac{n}{2\z_{out}} (1+\varepsilon )} \mbox{ eventually in }n \right) =1.
%	\end{equation}
%Thus it must be $\z_{out} \leq \e ( 1+\e ) $ and hence, since $\e$ is arbitrary, $\z_{out}=0$. In the case $\z_{in} = \z_{out} =0$ the proof of the inner bound in Theorem \ref{th:intro} works with minor changes to show that for all $\e >0$  one has 
%	\[ \bbP ( \# A_n \geq n/\e \mbox{  eventually in }n) =1 . \]
%\end{Remark}

\appendix

\section{IDLA results}\label{appendix}
In this section we collect shape theorems for standard IDLA on $\bbZ$, IDLA with killing upon exiting a given interval, and IDLA on Bernoulli percolation. 

\subsection{IDLA on $\bbZ$}
Let $A(0)=\{ 0\}$, and for $n\geq 1$  write $A_n$ for the IDLA cluster on $\bbZ$ obtained by adding $n$ particles to $A(0)$, all starting from the origin. Then 
	\[ A_n = A_{n-1} \cup \{ X_n \} , \]
where $X_n$ denotes the exit location of the $n^{th}$ random walk from $A_{n-1}$. Write 
	\[ A_n = [-a_n , b_n ] , \qquad n\geq 0\]
so that $a_0 = b_0 =0$ and  $a_n + b_n = n$ for all $n\geq 0$.  Then 
	\[ \bbE ( b_{n+1} | \cF_n ) = b_n + \bbP (X_{n+1} = b_n +1 ) 
	= b_n + 1-\frac{b_n+1}{n+2} = \left( \frac{n+1}{n+2} \right) (b_n +1) . \]
It follows that if 
	\[ M_n = (n+1) b_n - \frac{n(n+1)}2 , \qquad n\geq 0 \]
then the process $(M_n)_{n\geq 0}$ is a martingale with respect to the same filtration. 
Moreover, 
	\[ |M_n - M_{n-1} | = | n(b_n - b_{n-1} ) -(n- b_n) | \leq n \]
for all $n\geq 1$. 
\begin{Theorem}[Azuma-Hoeffding]
Let $(X_n )_{n\geq 0}$ be a martingale with respect to the filtration $(\cF_n )_{n\geq 0}$, and assume that $|X_k - X_{k-1}| \leq c_k $ almost surely, for all $k\geq 1$. Then for all $n\geq 1$ and all $\d >0$ 
	\[ \bbP ( |X_n - X_0 | > \d ) \leq 2 \exp \left( -\frac{\d^2}{2\sum_{k=1}^n c_k ^2} \right) . \]
\end{Theorem}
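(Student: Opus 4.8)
The statement is the classical Azuma--Hoeffding inequality, and the natural approach is the exponential-moment (Chernoff) method combined with a conditional version of Hoeffding's lemma. The plan is as follows.

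First I would fix $t>0$ and apply Markov's inequality to the nonnegative random variable $e^{t(X_n-X_0)}$, giving $\bbP(X_n-X_0>\d)\le e^{-t\d}\,\bbE[e^{t(X_n-X_0)}]$. Writing $D_k:=X_k-X_{k-1}$ for the martingale increments, we have $X_n-X_0=\sum_{k=1}^n D_k$ with $\bbE[D_k\mid\cF_{k-1}]=0$ and $|D_k|\le c_k$ almost surely. The key ingredient is the conditional Hoeffding lemma: if $Y$ satisfies $\bbE[Y\mid\cG]=0$ and $|Y|\le c$ almost surely, then $\bbE[e^{tY}\mid\cG]\le e^{t^2c^2/2}$. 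I would prove this from convexity of $y\mapsto e^{ty}$ on $[-c,c]$: write $e^{ty}\le\frac{c-y}{2c}e^{-tc}+\frac{c+y}{2c}e^{tc}$, take conditional expectation (the linear term in $Y$ vanishes since $\bbE[Y\mid\cG]=0$), obtaining $\bbE[e^{tY}\mid\cG]\le\cosh(tc)$, and finally use $\cosh x=\sum_{k\ge0}x^{2k}/(2k)!\le\sum_{k\ge0}x^{2k}/(2^k k!)=e^{x^2/2}$, valid because $(2k)!\ge 2^k k!$.

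With this in hand I would peel off the increments one at a time. By the tower property and $\cF_{n-1}$-measurability of $e^{t\sum_{k=1}^{n-1}D_k}$,
\[
\bbE\bigl[e^{t\sum_{k=1}^n D_k}\bigr]
=\bbE\bigl[e^{t\sum_{k=1}^{n-1}D_k}\,\bbE[e^{tD_n}\mid\cF_{n-1}]\bigr]
\le e^{t^2c_n^2/2}\,\bbE\bigl[e^{t\sum_{k=1}^{n-1}D_k}\bigr],
\]
and iterating down to $k=1$ gives $\bbE[e^{t(X_n-X_0)}]\le\exp\bigl(\tfrac{t^2}{2}\sum_{k=1}^n c_k^2\bigr)$. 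Substituting back, $\bbP(X_n-X_0>\d)\le\exp\bigl(-t\d+\tfrac{t^2}{2}\sum_{k=1}^n c_k^2\bigr)$; minimizing the exponent in $t$ by taking $t=\d/\sum_{k=1}^n c_k^2$ yields $\bbP(X_n-X_0>\d)\le\exp\bigl(-\d^2/(2\sum_{k=1}^n c_k^2)\bigr)$. Since $(-X_n)_{n\ge0}$ is again a martingale for the same filtration with the same increment bounds $|(-X_k)-(-X_{k-1})|\le c_k$, the identical estimate holds for $\bbP(X_0-X_n>\d)$; combining the two via $\{|X_n-X_0|>\d\}\subseteq\{X_n-X_0>\d\}\cup\{X_0-X_n>\d\}$ produces the factor $2$ and completes the proof.

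The only real obstacle is the conditional Hoeffding lemma, and even that is standard once one commits to the convexity bound above; the telescoping and the one-variable optimization over $t$ are entirely routine. Since this is a textbook inequality it would also be perfectly legitimate to omit the argument and cite a standard reference (e.g.\ on concentration of martingales), but the self-contained proof above is short enough to include.
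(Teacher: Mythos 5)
Your proof is correct: it is the standard Chernoff-bound argument with the conditional Hoeffding lemma proved by convexity, the tower-property telescoping, optimization in $t$, and the union bound for the two-sided statement. The paper itself states Azuma--Hoeffding as a classical result without proof, so there is nothing to compare against; your self-contained argument (or simply citing a standard reference) is entirely adequate.
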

We apply the above inequality with $c_k = k$ and $\d = n^{3/2 +\e}$, to get 
	\[ \bbP (|M_n | > n^{3/2+\e} ) \leq 2 \exp \left( -\frac{n^{2\e}}{2} \right) 
	\leq e^{-n^{\e}} \]
where the last inequality holds for $n$ large enough. Since on the high probability event 
$\{ |M_n| \leq n^{3/2+\e} \}$ we have that 
	\[ \left| \frac{b_n}n -\frac 12 \right| \leq n^{-1/2+\e} , \qquad 
	 \left| \frac{a_n}n -\frac 12 \right| \leq n^{-1/2+\e} , \]
this proves the following shape theorem for IDLA on $\bbZ$. 
\begin{Theorem}\label{th:IDLAshape}
Let $(A_n)_{n\geq 0}$ denote an IDLA process on $\bbZ$ starting from $A(0)=\{0\}$, and write $B_r$ for the discrete ball of radius $r$ on $\bbZ$ centred at $0$. Then for all $\e >0$ it holds 
	\begin{equation}\label{IDLAshape}
	 \bbP \Big( B_{n/2 - n^{1/2+\e} } \subseteq A_n \subseteq B_{n/2 + n^{1/2+\e} } 
	\Big) \geq 1-e^{-n^\e } , 
	\end{equation}
for $n$ large enough. 
\end{Theorem}

\subsection{IDLA with killing upon exiting $I$} \label{sec:appkilling}
We now use the same martingale to prove an analogous result for IDLA stopped upon exiting a given interval. Fix $\d >0$ and let 
	\[ I =\left[ -\frac n2 (1-\d ) , \frac n2 (1-\d ) \right] . \]
We write $(\bar A_n)_{n\geq 0}$ for the IDLA process on $\bbZ$ starting from $\bar A(0) =\{0\}$ with particles killed upon exiting the interval $I$. If $\bar A_n = [-\bar a_n , \bar b_n ]$, let
	\[ T = \inf \Big\{ n\geq 0 : \bar b_n = \frac n2 (1-\d ) \mbox{ or } \bar a_n = \frac n2 (1-\d ) \Big\} \]
denote the first time the cluster reaches the boundary of $I$. Then up to time $T$ we can couple the process $(\bar A_n)_{n\geq 0}$ with a standard IDLA process $(A_n)_{n\geq 0}$ so that $\bar A_n = A_n $ for all $n\leq T$. 

Fix $\e \in (0,1/2)$ and suppose that $n$ is large enough so that \eqref{IDLAshape} holds. Then with probability exceeding $1- e^{-n^\e}$ we have that for all $k\leq n(1-\d ) (1- n^{-1/2+2\e} )$ 
	\[  b_k \leq b_{n(1-\d ) (1- n^{-1/2+2\e} )} 
	\leq \frac{n(1-\d )}2 (1- n^{-1/2+2\e} ) + n^{1/2+\e } \leq \frac{n(1-\d)}2 - n^{1/2+\e} , \]
which implies $T> n(1-\d ) (1- n^{-1/2+2\e} )$. 

Assume that $b_T = \frac n2 (1-\d )$. Then on the high probability event $T> n(1-\d ) (1- n^{-1/2+2\e} )$ 
	\[ a_T = T-b_T \geq \frac{n}2 (1-\d ) ( 1-2n^{-1/2+2\e }) , \]
so that $ \# (I \setminus A_T ) \leq n^{1/2+2\e}$. Moreover, since necessarily $T \leq n(1-\d)$, at time $T$ there are still at least $n\d $ particles to be released, with killing upon exiting $I$. In order for $\bar A_n = I$ it thus suffices that at least $n^{1/2+2\e}$ of them settle before reaching the right boundary of $I$. Since out of $n\d $ random walks the number of walks that reach $-\frac n2 (1-\d ) $ before $\frac n2 (1-\d )$ is a Binomial random variable $B(n\d , 1/2)$, we find 
	\[ \bbP (B(n\d , 1/2 ) < n^{1/2+2\e} ) \leq \bbE ( e^{-B(n\d , 1/2)} ) e^{n^{1/2+2\e }} 
	\leq \left( \frac{e^{-1}+1}{2} \right)^{n\d } e^{n^{1/2+2\e }}  \leq e^{-n \d /10 } \]
for $n$ large enough. In all, 
	\[ \bbP (\bar A_n \subsetneq I ) \leq \bbP (T\leq n(1-\d ) (1- n^{-1/2+2\e} ) ) 
	+ \bbP (B(n\d , 1/2 )< n^{1/2+2\e} )
	\leq 2 e^{-n^\e } \]
for $n$ large enough. This proves Lemma \ref{le:IDLA}. 

\subsection{IDLA on Bernoulli percolation}
Let $(\o_z)_{z\in \bbZ}$ denote a Bernoulli$(p)$ vertex percolation configuration, in which each vertex $z \in \bbZ$ is open $(\o_z =1)$ with probability $p\in (0,1)$, and closed $(\o_z =0)$ with probability $1-p$. We let $(A_n^p)_{n\geq 0}$ denote an IDLA process starting from $A^p_0 = \{0\}$, with particles only allowed to settle at open sites of the percolation configuration, and refer to such a process as $p$-IDLA. Denote by $\bfP_p$ the distribution of the $p$-IDLA process, averaged on the environment. We further let $v_n$ denote the odometer function associated to the stabilization of the initial particle configuration $\eta = n \d_0$ consisting of $n$ active particles at the origin. Note that here all the stack instructions are movement instructions, so that $v_n (x)$ counts the number of particle emissions from $x$ until stabilization. 
For $p$-IDLA we have the following shape theorem. 
\begin{Theorem}\label{th:IDLABernoulli}
For all $\e >0$ and $n$ large enough, 
	\[ \bfP_p \left( B_{\frac{n}{2p}(1-\e)} \subseteq supp (v_n) \subseteq 
	B_{\frac{n}{2p}(1+\e)} \right) \geq 1-e^{-n / (\log n)^2} . \]
\end{Theorem}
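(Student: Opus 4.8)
The plan is to reduce the shape theorem to a statement about the extremes of the occupied cluster, and then analyze that cluster by a perturbed version of the martingale already used above for ordinary IDLA. First I would record the structural fact that after $k$ particles the set of occupied sites is exactly $\{\text{open sites in }[l_k,r_k]\}$ for integers $l_k\le 0\le r_k$ (both open), and that the $(k+1)$st particle settles at the first open site to the left of $l_k$ or to the right of $r_k$: closed sites are transparent to the walks, so the walk from $0$ is an ordinary SRW absorbed at the two open sites $l_k^-<l_k$ and $r_k^+>r_k$ flanking the cluster. A short argument — ``a walk overshooting the cluster edge by $t$ must traverse $t$ consecutive closed sites'', combined with the high-probability absence of closed runs longer than $C\log n$ in any window of polynomial length — shows that $\mathrm{supp}(v_n)$ differs from $[l_n,r_n]$ by at most $C\log n$ on each side, and that all $n$ walks terminate inside a box of polynomial size. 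Hence it suffices to prove $r_n,-l_n\in[\tfrac n{2p}(1-\varepsilon),\tfrac n{2p}(1+\varepsilon)]$ with the stated probability.

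Next I would invoke concentration for the percolation: on an event of probability $\ge 1-e^{-n/(\log n)^2}$ (a union bound of Hoeffding estimates over the relevant scales), the number of open sites in $[1,m]$ is $pm$ up to small relative error, uniformly over $m$ in the range of interest. Writing $R_k$ (resp.\ $L_k$) for the number of open sites in $(0,r_k]$ (resp.\ $[l_k,0)$), so that $R_k+L_k=k$, this has two uses: $r_k$ is determined by $R_k$ up to lower-order error (so it is enough to show $R_n,L_n\approx n/2$), and the gambler's ruin identity
	\[ \bbP\big(\text{particle $k{+}1$ settles on the right}\mid\cF_k\big)=\frac{-l_k^-}{r_k^+-l_k^-} \]
(with $\cF_k$ generated by the percolation and the first $k$ walks) equals $\tfrac{L_k+1}{k+2}+\Delta_k$, where $|\Delta_k|$ can be made as small as we like for all $k\ge n/\log n$, and $|\Delta_k|\le 1$ trivially for smaller $k$.

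Then comes the core step, an approximate-martingale computation mirroring the identity $M_n=(n+1)b_n-\tfrac{n(n+1)}2$ above. Writing $\one(\text{right})=\bbP(\text{right}\mid\cF_k)+\xi_k$ with $\xi_k$ a bounded martingale difference, one obtains the affine recursion $R_{k+1}=\tfrac{k+1}{k+2}(R_k+1)+\xi_k+\Delta_k$, whose solution is $R_n-\tfrac n2=\tfrac1{n+1}\sum_{k<n}(k+2)(\xi_k+\Delta_k)$. The $\xi$-part has increments $\le n+1$, so it is $\le\varepsilon n^2$ except with probability $e^{-c\varepsilon^2 n}$ by Azuma--Hoeffding; the $\Delta$-part is $\tfrac1{n+1}(O((n/\log n)^2)+\delta n^2)\le\varepsilon n$ by the split above. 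The same argument bounds $L_n$, and converting back through the percolation estimate gives $r_n,-l_n\in[\tfrac n{2p}(1-2\varepsilon),\tfrac n{2p}(1+2\varepsilon)]$; adding the $O(\log n)$ boundary correction from the first paragraph and relabelling $\varepsilon$ completes the proof. (The degenerate case $\omega(0)=0$ only shifts the cluster centre by an $O(\log n)$ amount and is handled identically.)

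The main obstacle is exactly that, unlike ordinary IDLA, $R_n$ is not an affine image of a martingale: the random environment perturbs the drift of each step. Making this rigorous forces one to (i) quantify how uniform the percolation concentration must be — strong enough, at scales down to roughly $n/\log n$, that $\tfrac1n\sum_k(k+2)|\Delta_k|$ stays below $\varepsilon n$ while the failure probability stays below $e^{-n/(\log n)^2}$; and (ii) check that the small-$k$ steps, where no good relative concentration is available, contribute only $O(n/(\log n)^2)$ to the sum, so the trivial bound $|\Delta_k|\le1$ suffices there. The structural lemma, the gambler's ruin identity, and the Azuma estimate are all routine.
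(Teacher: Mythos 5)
Your argument is correct in strategy but takes a genuinely different route from the paper. The paper proves the inner bound pointwise, \`a la Lawler--Bramson--Griffeath: for each $z$ in the target ball it compares $M_z$ (walks hitting $z$ before exiting $B_{n/2p}$) with $\tilde L_z$ (a sum over open sites of independent hitting indicators), wins by a Chernoff bound with an $\varepsilon$-sized margin in the means, and union-bounds over $z$; the outer bound then follows by counting open sites in the slightly smaller ball and bounding the sum of $O(\varepsilon n)$ geometric gaps beyond it. You instead exploit the one-dimensional structure globally: the occupied set is the set of open sites of an interval $[l_k,r_k]$, each new particle is a gambler's-ruin trial between the two flanking open sites $l_k^-$ and $r_k^+$, and the edge counts obey a perturbed version of the martingale recursion used in the appendix for ordinary IDLA, with the environment entering only through the drift error $\Delta_k$. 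Your route gives both inclusions at once from a single Azuma estimate and dispenses with the union bound over $z$; the paper's route avoids having to control the environment-induced drift uniformly along the whole evolution (the main delicacy you correctly identify) and is closer in spirit to arguments that survive in higher dimensions. Your gambler's-ruin identity, the affine recursion $R_{k+1}=\tfrac{k+1}{k+2}(R_k+1)+\xi_k+\Delta_k$, and its solution $R_n-\tfrac n2=\tfrac1{n+1}\sum_{k<n}(k+2)(\xi_k+\Delta_k)$ are all correct.

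Two quantitative points must be repaired to reach the stated probability $1-e^{-n/(\log n)^2}$. First, the event ``no closed run longer than $C\log n$ in a window of polynomial length'' fails with probability that is only polynomially small (of order $n^{O(1)}(1-p)^{C\log n}$), so it cannot be invoked here; but you do not need an $O(\log n)$ boundary correction --- a correction of size $\varepsilon n/10$ suffices for the theorem, and the event that some gap in $[-n^2,n^2]$ exceeds $\varepsilon n/10$ has probability at most $Cn^2(1-p)^{\varepsilon n/10}\ll e^{-n/(\log n)^2}$. Second, the claim $|\Delta_k|\le\delta$ for all $k\ge n/\log n$ does not follow from relative concentration ``at scales down to $n/\log n$'' alone: a priori one of $L_k,R_k$ could be much smaller than $k$, and assuming $L_k\approx R_k\approx k/2$ at this stage would be circular. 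The fix is to use absolute deviation bounds for the position of the $m$-th open site at \emph{every} scale $m\le n$ (Bernstein for sums of geometrics, with deviation allowance of order $\sqrt{mn}/\log n + n/(\log n)^2$, so that each event fails with probability at most $e^{-2n/(\log n)^2}$); since the smaller side enters the gambler's-ruin ratio with weight $O(1/k)$, writing $\Delta_k$ as a difference of ratios shows $|\Delta_k|\lesssim \bigl(\sqrt{kn}/\log n + n/(\log n)^2\bigr)/k = o(1)$ uniformly for $k\ge n/\log n$, and your bound on $\tfrac1{n+1}\sum_k(k+2)|\Delta_k|$ then goes through with room to spare. With these adjustments your proof is complete.
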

A similar result appears in \cite{ben2000asymptotic}, Theorem 4, where the focus is on higher dimensions. Since in dimension $1$ the proof is substantially simpler, and we need quantitative estimates that do not appear in \cite{ben2000asymptotic}, we include it here for completeness. 

\subsubsection*{Proof of the inner bound}
Fix $\e >0$ as in the statement. 
Let us say that a particle \emph{settles} when it first reaches an open empty site. 
Following Lawler, Bramson and Griffeath \cite{lawler1992internal}, we think of the random walks' trajectories as continuing indefinitely, even after the corresponding particles settle. Then for arbitrary $z \in B_{\frac{n}{2p}(1-\e)}$ define 
	\[ \begin{split} 
	N_z & = \sum_{k=1}^n \one \big\{ 
	 \mbox{The } k^{th} \mbox{ walk reaches }z \mbox{ before settling} \big\} 
	\\ M_z & = \sum_{k=1}^n \one \big\{  \mbox{The } k^{th} \mbox{ walk reaches } z \mbox{ before exiting }B_{n/2p} \big\} \\
	 L_z & = \sum_{k=1}^n \one \big\{ \mbox{The } k^{th} \mbox{ walk settles before reaching }z, \mbox{ and reaches } z \mbox{ before exiting }B_{n/2p} \big\} . 
	\end{split}\]
Then $N_z \geq M_z - L_z$, and so 
	\begin{equation}\label{eqA}
	 \bfP_p \left( z \notin  supp (v_n) \right) 
	= \bfP_p (N_z =0 ) \leq \bfP_p (M_z \leq a ) + \bfP_p (L_z \geq a ) 
	\end{equation}
for $a \in \bbR$ to be chosen later. 

Let $\bbP_x$ denote the law of a simple random walk on $\bbZ$ starting from $x$, and write $\tau_z$ and $\tau_{B_{n/2p}}$ for the hitting time of $z$ and the exit time from the ball $B_{n/2p}$ respectively. Then 
	\[ \bfE_p (M_z) = np \bbP_0 (\tau_z < \tau_{B_{n/2p}} ) 
	= np \bigg(  \frac{\frac n{2p}}{\frac n{2p} + |z|} \bigg) \geq \frac{np}{2-\e} \]
where the last inequality holds for $n$ large enough. 
Moreover, if
	\[  \tilde{L}_z = \sum_{y\in B_{n/2p}} \o(y) \one_y (\tau_z < \tau_{B_{n/2p}} ) , \]
then $L_z \leq \tilde{L}_z$, and 
	\[ \bfE_p (L_z) \leq \bfE_p (\tilde L_z) 
	= p \sum_{y \in B_{n/2p}} \bbP_y ( \tau_z < \tau_{B_{n/2p}}) . 
	\]
Let $f$ denote the piecewise linear function whose graph in $[-1/(2p) , 1/(2p)]$ consists of the two line segments connecting $(-1/(2p) , 0)$ with $(|z|/n , 1)$ and $(|z|/n ,1)$ with $(1/(2p) , 0)$. 
Using the integral approximation 
	\[ \bbP_y ( \tau_z < \tau_{B_{n/2p}}) = \int_{-\frac 1{2p}}^{\frac 1{2p}} f(x) dx  + o (1) \]
as $n\to\infty$, we find 
	\[ \lim_{n\to \infty } \frac{ 1}{n} \bbP_y ( \tau_z < \tau_{B_{n/2p}}) 
	= \frac 1 2 , 
	\]
from which 
	\[ \frac{np}{2} (1-\e ) \leq 
	\bfE_p (\tilde L_z)
	\leq \frac{np}{2} \left( 1+ \frac \e 8 \right)  \]
for $n$ large enough. 
Then using a standard concentration inequality  for sums of indicators  (see \cite{alon2004probabilistic}, Corollary A.1.14), we obtain that there exists $c_\e >0$ such that 
	\[\begin{split} 
	 \bfP_p \left( M_z \leq  \bfE_p (\tilde{L}_z )\left( 1+\frac \e 4 \right) \right) & \leq  
	\bfP_p \left( M_z \leq  \bfE_p (M_z ) \left( 1-\frac \e {16} \right) \right) 
	\\ & \leq 2 \exp \left\{ - c_\e \bfE_p (M_z)\right\} 
	\leq \exp \left\{ -\frac{ n}{\log n} \right\} 
	\end{split} \]
and 
	\[\begin{split} 
	 \bfP_p \left( \tilde L_z \geq  \bfE_p (\tilde{L}_z )\left( 1+\frac \e 4 \right) \right) 
	 & \leq 2 \exp \left\{ - c_\e \bfE_p (\tilde L_z) \right\} 
	\leq \exp \left\{ -\frac{ n}{\log n} \right\} 
	\end{split} \]
for $n$ large enough. 
This shows that taking  $a =  \bfE_p (\tilde{L}_z) \left( 1+\frac \e 4 \right) $ in \eqref{eqA} yields 
	\[  \bfP_p \left( B_{\frac{n}{2p}(1-\e)} \nsubseteq supp (v_n)  \right)  
	 \leq \frac np \exp \left\{ -\frac{ n}{\log n} \right\} \leq e^{- n / (\log n)^2 }\]
for $n$ large enough, which proves the inner bound.

\subsubsection*{Proof of the outer bound}
Fix $\e >0$ as in the statement. Then for $\l >0$ small enough 
	\[ \begin{split} 
	\bfP_p \bigg( \sum_{x \in B_{\frac n{2p} (1-\frac \e 4 )}} \o (x)&  \leq n \Big( 1-\frac \e 4 \Big)^2 \bigg)
	 \leq \bfE_p \bigg( \exp \bigg\{ -\l  \sum_{x \in B_{\frac n{2p} (1-\frac \e 4 )}} \o (x) \bigg\}  \bigg) e^{\l n (1-\e /4)^2 } 
	\\ & = ( 1-p + pe^{-\l } )^{\frac n p (1-\e /4 ) } e^{\l n (1-\e /4)^2 } 
	\leq e^{-\l n (1-\e /4) \e /4 } \leq e^{-\frac n{\log n}} 
	\end{split}\]
for $n$ large enough. It follows that 
	\begin{equation}
	 \begin{split}
	\bfP_p \bigg( &\Big| A_n^p  \setminus B_{\frac n{2p} (1-\frac \e 4 )}  \Big| > \frac{n\e }{2} \bigg)\leq  \\ & \leq \bfP_p \bigg( \sum_{x \in B_{\frac n{2p} (1-\frac \e 4 )}} \o (x) \leq n \Big( 1-\frac \e 4 \Big)^2 \bigg) + \bfP_p \left( B_{\frac{n}{2p}(1-\e /4)} \nsubseteq supp (v_n)  \right)  
	 \leq 2 e^{-\frac n{\log n}} 
	\end{split} 
	\end{equation}
for $n$ large enough. On the high probability event 
	\begin{equation} \label{eq:outer}
	 \big| A_n^p  \setminus B_{\frac n{2p} (1-\frac \e 4 )} \big| \leq \frac{n\e}{2} 
	 \end{equation}
 at most $n\e /2$ particles settle outside the ball of radius $\frac{n}{2p} (1-\frac \e 4)$. We now show that, even if they all settle on one side, the outer bound is satisfied with high probability. To this end, let $(\D_k)_{k \in \bbZ \setminus \{ 0\}}$ denote the gaps between consecutive open sites to the right of $\lfloor  \frac{n}{2p}(1- \frac \e 4)\rfloor$ if $k>0$, and to the left of $-\lfloor  \frac{n}{2p}(1-\frac \e 4)\rfloor$ if $k<0$. 
\begin{figure}[!h] \label{fig:IDLABproof}
  \centering
    \includegraphics[width=.9\textwidth]{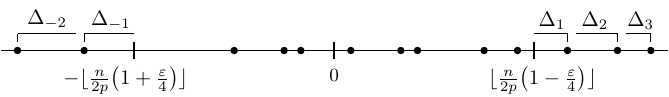}
    \caption{An illustration of the $\Delta_k$ notation. }
\end{figure}

Then on the high probability event  \eqref{eq:outer} it must be 
	\[ supp(v_n) \subseteq B_{ \frac n{2p} ( 1-\frac \e 4) + \max \big\{ \sum_{k=1}^{n\e /2} \D_k , \, \sum_{k=1}^{n\e /2} \D_{-k} \big\} } . \]
It thus suffices to show that 
	\[ \bfP_p \bigg( \max \bigg\{ \sum_{k=1}^{n\e /2} \D_k , \,  \sum_{k=1}^{n\e /2} \D_{-k} \bigg\} \geq \frac {n\e}{2p}\Big( 1+ \frac \e  4 \Big) \bigg) \leq 2 e^{-\frac n{\log n}} \]
for $n$ large enough. To this end, let $(G_k)_{k\in \bbZ \setminus \{ 0\}}$ be a sequence of i.i.d.\ Geometric$(p)$ random variables, and note that since $\Delta_1 \preceq G_1$, $\Delta_2 \preceq G_2$ we have the stochastic domination
	\[  \sum_{k=1}^{n\e /2} \D_k \preceq  \sum_{k=1}^{n\e /2} G_k , \qquad 
	 \sum_{k=1}^{n\e /2} \D_{-k} \preceq  \sum_{k=1}^{n\e /2} G_{-k} . \]
Thus, choosing $\l >0 $ small enough depending on $p$ and $\e$, we find 
	\[ \begin{split} 
	\bbP \left(  \sum_{k=1}^{n\e /2} G_k \geq \frac{n\e}{2p} ( 1+\e /4) \right) 
	& \leq \big[ \bbE (e^{\l G_1} )\big]^{n\e /2} \exp \Big\{ - \frac{n\e}{2p} ( 1+\e /4)\Big\} 
	\\ & = \bigg( \frac{pe^\l}{1-(1-p)e^\l} \bigg)^{n\e/2} \exp \Big\{ - \frac{n\e}{2p} ( 1+\e /4)\Big\} 
	\\ & \leq \exp \Big\{ - \frac{n\e}{2p} ( 1+\e /8)\Big\} \leq e^{-\frac n{\log n}} 
	\end{split} \]
for $n$ large enough, which concludes the proof.

\smallskip 

\bibliography{ARWbib}
\bibliographystyle{plain}

\end{document}